\numberwithin{equation}{section}
\theoremstyle{plain}
\newtheorem{Lemma}{Lemma}[section]
\newtheorem{Proposition}[Lemma]{Proposition}
\newtheorem{Theorem}[Lemma]{Theorem}
\newtheorem{Corollary}[Lemma]{Corollary}
\theoremstyle{definition}
\newtheorem{Definition}[Lemma]{Definition}
\newtheorem{Example}[Lemma]{Example}
\newtheorem{Remark}[Lemma]{Remark}
\let\OLDthebibliography\thebibliography
\renewcommand\thebibliography[1]{
  \OLDthebibliography{#1}
  \setlength{\parskip}{0pt}
  \setlength{\itemsep}{2pt }}
\begin{document}
\title{\textbf{Mild and viscosity solutions to\\ semilinear parabolic path-dependent PDEs}}
\author{Alexander Kalinin\footnote{Department of Mathematics, University of Mannheim, Germany. Email: {\tt kalinin@math.uni-mannheim.de}} \and Alexander Schied\setcounter{footnote}{6}\footnote{Department of Statistics and Actuarial Science, University of Waterloo and Department of Mathematics, University of Mannheim. Email: {\tt alex.schied@gmail.com}\newline The authors gratefully acknowledge support by Deutsche Forschungsgemeinschaft (DFG) through Research Grants SCHI/3-1 and SCHI/3-2.}
\date{\small First version: November 24, 2016\\
\small This version:  November 14, 2018}}
\maketitle

\begin{abstract}
We study and compare two concepts for weak solutions to semilinear para\-bo\-lic path-dependent partial differential equations (PPDEs). The first is that of mild solutions as it appears, e.g., in the log-Laplace functionals of historical superprocesses. The aim of this paper is to show that mild solutions are also solutions in a viscosity sense. This result is motivated by the fact that mild solutions can provide value functions and optimal strategies for problems of stochastic optimal control. Since unique mild solutions exist under weak conditions, we obtain as a corollary a general existence result for viscosity solutions to semilinear parabolic PPDEs.
\end{abstract}

\noindent
{\bf MSC2010 classification:} 60H10, 60H30, 60J68, 35D40, 35D30, 35K10, 93E20.\\
{\bf Keywords:} path-dependent PDE, PPDE, mild solution, viscosity solution, historical superprocess, path process, stochastic optimal control.

\section{Introduction}

The introduction of horizontal and vertical derivatives of non-anticipative functionals on path spaces by Dupire \cite{Dupire} and Cont and Fourni\'e \cite{ItoFormula} facilitated the formulation of a new class of \emph{path-dependent partial differential equations} (PPDEs). In relevant publications such as   Peng \cite{PengViscosity,PengICM}, Peng and Wang \cite{Peng2011}, Ji and Yang \cite{Ji2013}, Ekren, Keller, Touzi, and Zhang \cite{PPDEViscosity}, and Henri-Labordere, Tan, and Touzi \cite{HenryLabordereTanTouzi}, the most common approach to construct classical or viscosity solutions to PPDEs is to use backward stochastic differential equations (BSDEs). In this paper, we  propose an alternative approach in the case of a \emph{semilinear parabolic} PPDE,
\begin{equation}\label{TVP}
\left\{
\begin{array}{rll}
(\partial_{t} + \mathscr{L})(u)(t,x) &= f(t,x,u(t,x)) &\text{for $(t,x)\in [0,T)\times C([0,T],\mathbb{R}^{d})$,}\\
u(T,x) &= g(x) &\text{for $x\in C([0,T],\mathbb{R}^{d})$.}
\end{array}
\right.
\end{equation}
Here, $T> 0$, $d\in\mathbb{N}$, $\partial_{t}$ is the horizontal derivative, and $\mathscr{L}$ is a linear second-order differential operator of the form
\[
\mathscr{L}= \frac{1}{2}\sum_{i,j=1}^{d} a_{i,j}(t,x) \partial_{x_{i}}\partial_{x_{j}} + \sum_{i=1}^{d}b_{i}(t,x) \partial_{x_{i}},\quad (t,x)\in [0,T)\times C([0,T],\mathbb{R}^{d}),
\]
for the partial vertical derivatives $\partial_{x_i}$ and non-anticipative Borel measurable functions $a_{i,j}$ and $b_i$ such that the matrix $(a_{i,j}(t,x))$ is positive definite for all $(t,x)\in [0,T)\times C([0,T],\mathbb{R}^{d})$. Moreover, $f$ is a non-anticipative measurable function on $[0,T)\times C([0,T],\mathbb{R}^{d})\times D$, where $D\subset\mathbb{R}$ is a non-degenerate interval, and $g:C([0,T],\mathbb{R}^{d})\rightarrow D$ is Borel measurable.

Our starting point is the observation that for $D=\mathbb{R}_{+}$ a well-studied example of solutions to such a semilinear parabolic PPDE is provided by the log-Laplace functionals of a historical superprocess in the sense of Dawson and Perkins \cite{HistProcessesPerkins} and Dynkin \cite{PathProcesses}. Indeed, in the special case of a historical Brownian motion, these log-Laplace functionals are characterized by functions $u(r,x)$ that solve a Markovian integral equation of the form
\begin{equation}\label{mild solution integral eq intro}
u(r,x)=E_{r,x}[g(X^{T})]-E_{r,x}\bigg[\int_{r}^{T}f(u(s,X^{s}))\,ds\bigg]
\end{equation}
for all $(r,x)\in [0,T]\times C([0,T],\mathbb{R}^{d})$, where $f$ is, e.g., of the form $f(z)=z^p$ for all $z\in\mathbb{R}_{+}$ and some $p\in(1,2]$ and where under the probability measure $P_{r,x}$ the process $X$ has the law of a Brownian motion started at $x(r)$ at time $r$ and satisfies $X_s=x(s)$ for $s\in [0,r]$. The notation $X^s$ refers as usual to the process $X$ stopped at time $s$. It is then easy to see that $u$ corresponds to a mild solution to \eqref{TVP} if we let $a_{i,j}=\delta_{i,j}$ and $b_i=0$ for all $i,j\in\{1,\dots,d\}$. Mild solutions have the advantage that their existence can be established via standard methods such as Picard iteration and non-extendibility arguments under much less restrictive conditions than classical or BSDE solutions.  A general existence result is proved by the first author in the companion paper \cite{Kalinin}[Theorem 2.11], and we will state it here without proof as Theorem~\ref{mild sol existence thm}. 

Moreover, it was observed by the second author in \cite{SchiedFuel} that Laplace functionals of historical superprocesses yield value functions and optimal strategies for certain problems of optimal stochastic control arising in mathematical finance. Under rather mild conditions, the same arguments as in the proof of \cite[Theorem 2.8]{SchiedFuel} can actually be applied to any mild solution $u$ to \eqref{TVP} if $D=\mathbb{R}_{+}$ and $f(t,x,z)=\alpha(t,x)z^p$ for some $p>1$. Using these arguments, we show that $|z|^pu(r,x)$ is the value function of a certain stochastic control problem. A standard heuristic asserts that value functions should be viscosity solutions to a certain Hamilton--Jacobi--Bellman (HJB) equation. Following the reasoning in \cite[Section 1.1]{SchiedFuel}, one easily finds that this HJB equation should be indeed equivalent to our PPDE \eqref{TVP}. Therefore, it is a natural question whether our mild solution $u$ is also a solution in the viscosity sense. This is the main question we address in this paper. 
 Our main results, Theorems~\ref{Main Result 1} and~\ref{Main Result 2}, answer this question affirmatively under rather weak assumptions. In particular, we do not require that $u$ is continuous but only need a weaker notion of right-continuity. In the affine case, even right-continuity can be dropped. We observe moreover that the spaces of test functions used in \cite{PPDEViscosity} can be increased in our case, thus yielding a solution concept that is stronger than the one in \cite{PPDEViscosity} (compare also \cite[Remarks 3.5 and 3.8]{PPDEViscosity}). As a corollary to 
Theorems~\ref{mild sol existence thm}, ~\ref{Main Result 1}, and~\ref{Main Result 2} we obtain a general existence result for viscosity solutions to \eqref{TVP}. After the authors finished a previous version of this paper, they became aware of \cite{CossoEtal2018}, where results were obtained that partially precede our Theorem \ref{Main Result 1}.  

\medskip

This paper is organized as follows. In Section~\ref{prelim results section},
 we first recall some preliminaries. Specifically, in Section~\ref{Measurability section}, we discuss notation and topologies on path spaces and we propose a slight modification of the setup used, e.g.,  in \cite{ItoFormula,PPDEViscosity}. In Section~\ref{path space calculus section}, we recall from  \cite{Dupire,ItoFormula}  the definitions of horizontal and vertical derivatives on path spaces. In Section~\ref{TVP Section}, we give a precise formulation of the terminal value problem \eqref{TVP}, and in Section~\ref{mild solution section} we discuss the corresponding mild solutions and how they relate to our problem in stochastic optimal control. Section~\ref{Test functions for viscosity solutions} introduces two notions of viscosity solutions. Our main results are stated in Section~\ref{Main results Section}. The relations between the two notions of viscosity solutions presented here and the one in \cite{PPDEViscosity} are discussed in Section~\ref{Relations between the notions of viscosity solutions}. All proofs are deferred to Section~\ref{Derivation of the results}.

\section{Preliminaries and main results}\label{prelim results section}

Throughout the paper, let $T > 0$, $d\in\mathbb{N}$, and $|\cdot|$ be the Euclidean norm on $\mathbb{R}^{d}$.

\subsection{Measurability and right-continuity on path spaces}\label{Measurability section}

In the sequel, we let $\widetilde{S}$ denote the linear space of all $\mathbb{R}^{d}$-valued c\`{a}dl\`{a}g maps on the interval $[0,T]$ and set $S:=C([0,T],\mathbb{R}^{d})$. We work with the canonical process $\widetilde{\xi}:[0,T]\times\widetilde{S}\rightarrow\mathbb{R}^{d}$, $\widetilde{\xi}_{t}(x):= x(t)$ and its restriction $\xi$ to $[0,T]\times S$. By $(\widetilde{\mathscr{S}}_{t})_{t\in [0,T]}$ we denote the natural filtration of $\widetilde{\xi}$ and set $\mathscr{S}_{t} := S\cap \widetilde{\mathscr{S}}_{t}$ for all $t\in [0,T]$, which gives the natural filtration $(\mathscr{S}_{t})_{t\in [0,T]}$ of $\xi$. For each $t\in [0,T]$ and all $x\in \widetilde{S}$, we write
\[
\|x\|:=\sup_{s\in [0,T]} |x(s)|
\]
and let $x^{t}\in\widetilde{S}$ be the map $x$ stopped at time $t$. That is, $x^{t}(s) = x(s\wedge t)$ for all $s\in [0,T]$. Of course, $\widetilde{S}$ equipped with $\|\cdot\|$ is a Banach space, which, however, fails to be separable, and $S$ is a separable closed set in $\widetilde{S}$.

Due to the non-separability of $\widetilde{S}$ under the supremum norm and the fact that the Borel $\sigma$-field of $\widetilde{S}$ with respect to $\|\cdot\|$ is strictly larger than the cylindrical $\sigma$-field $\widetilde{\mathscr{S}}_{T}$, we equip $\widetilde{S}$ with a complete metric $\rho$ that induces the Skorohod topology and which satisfies $\rho(x,y) \leq \|x-y\|$ for all $x,y\in\widetilde{S}$. For instance, such a metric is introduced in Billingsley~\cite{BillingsleyOld}. Then $\widetilde{S}$ endowed with $\rho$ turns into a Polish space and the Borel $\sigma$-field of $\widetilde{S}$ with respect to $\rho$ is exactly $\widetilde{\mathscr{S}}_{T}$.

We recall that a map $u$ on $[0,T]\times\widetilde{S}$ is non-anticipative if $u(t,x) = u(t,x^{t})$ for all $t\in [0,T]$ and each $x\in\widetilde{S}$. Following Cont and Fourni\'{e}~\cite{ItoFormula} and using the setting in~\cite{PPDEViscosity}, we consider the complete pseudometric $d_{\infty}$ on $[0,T]\times\widetilde{S}$ given by
\[
d_{\infty}((r,x),(s,y)) := |r-s| + \|x^{r} - y^{s}\|.
\]
Then $d_{\infty}((r,x),(s,y)) = 0$ if and only if $r=s$ and $x^{r}=y^{r}$ for all $r,s\in [0,T]$ and each $x,y\in\widetilde{S}$. Thus, for $k\in\mathbb{N}$ an $\mathbb{R}^{k}$-valued map $u$ on $[0,T]\times\widetilde{S}$ that is continuous with respect to $d_{\infty}$ is automatically non-anticipative. However, there is no reason that $u$ is $(\widetilde{\mathscr{S}}_{t})_{t\in [0,T]}$-progressively measurable or at least $\mathscr{B}([0,T])\otimes\widetilde{\mathscr{S}}_{T}$-measurable. To circumvent this difficulty, we endow $[0,T]\times\widetilde{S}$ with the complete pseudometric $d_{S}$ defined via
\[
d_{S}((r,x),(s,y)):= |r -s| + \rho(x^{r},y^{s})
\]
under which $[0,T]\times\widetilde{S}$ becomes separable. Clearly, if $u$ is continuous\footnote{In comparison to~\cite{PPDEViscosity}, where $d_{\infty}$ is used, the choice of $d_{S}$ will have a negligible effect on the sizes of the test functions spaces we are going to use for our Definition~\ref{Viscosity Solutions} of viscosity solutions.} with respect to $d_{S}$, then continuity relative to $d_{\infty}$ follows and $u$ becomes $(\widetilde{\mathscr{S}}_{t})_{t\in [0,T]}$-progressively measurable, by Lemma~\ref{Measurability Lemma}. In fact, $u$ is already progressively measurable as soon as it is \emph{right-continuous} in the following sense: for each $(r,x)\in [0,T]\times\widetilde{S}$ and every $\varepsilon > 0$ there is $\delta > 0$ so that
\[
|u(s,y) - u(r,x)| < \varepsilon
\]
for all $(s,y)\in [r,T]\times \widetilde{S}$ with $d_{S}((s,y),(r,x)) < \delta$. If $d_{\infty}$ instead of $d_{S}$ is considered, then we will say that $u$ is right-continuous with respect to $d_{\infty}$. Moreover, the Borel $\sigma$-field of $[0,T]\times\widetilde{S}$ with respect to $d_{S}$ satisfies $\mathscr{B}([0,T]\times\widetilde{S})\subset\mathscr{B}([0,T])\otimes\widetilde{\mathscr{S}}_{T}$ and if $u$ is non-anticipative, then it is Borel measurable if and only if it is product measurable, as shown in Lemma~\ref{Measurability Lemma}.

Finally, let $B([0,T]\times\widetilde{S})$ be the linear space of all real-valued Borel measurable functions on $[0,T]\times\widetilde{S}$ and $B_{b}([0,T]\times\widetilde{S})$ be the set of all bounded $u\in B([0,T]\times\widetilde{S})$. By $C([0,T]\times\widetilde{S})$ we denote the set of all $u\in B([0,T]\times\widetilde{S})$ that are continuous (relative to $d_{S}$). With $C_{b}([0,T]\times\widetilde{S})$ the set of all bounded $u\in C([0,T]\times\widetilde{S})$ is meant. If $[0,T]$ is replaced by a non-degenerate interval $I$ in $[0,T]$ and $\widetilde{S}$ is replaced by a set $R\in\widetilde{\mathscr{S}}_{T}$ that is stable under stopping in the sense that $x^{t}\in R$ for all $(t,x)\in [0,T]\times R$, then non-anticipation, right-continuity, and the linear spaces $B(I\times R)$, $B_{b}(I\times R)$, $C(I\times R)$, and $C_{b}(I\times R)$ are defined in the same way.

\subsection{Differential calculus on path spaces}\label{path space calculus section}

Here we recall the definitions of the differential operators on path spaces that were introduced by Dupire~\cite{Dupire} and Cont and Fourni\'{e}~\cite{ItoFormula}. Again, we use the Cartesian setting in~\cite{PPDEViscosity}. For $k\in\mathbb{N}$ an $\mathbb{R}^{k}$-valued non-anticipative map $u$ on $[0,T)\times\widetilde{S}$ is called \emph{horizontally differentiable} if for each $(t,x)\in [0,T)\times\widetilde{S}$ the map
\[
[0,T-t)\rightarrow\mathbb{R}^{k}, \quad h\mapsto u(t + h,x^{t})
\]
is differentiable at $0$. Its derivative at $0$, known as the \emph{horizontal derivative} of $u$ at $(t,x)$, will be denoted by $\partial_{t}u(t,x)$. We say that $u$ is \emph{vertically differentiable} if for every $(t,x)\in [0,T)\times\widetilde{S}$ the map
\[
\mathbb{R}^{d}\rightarrow\mathbb{R}^{k}, \quad h\mapsto u(t,x + h\mathbbm{1}_{[t,T]})
\]
is differentiable at $0$. Its derivative at $0$ is called the \emph{vertical derivative} of $u$ at $(t,x)$ and will be represented by $\partial_{x} u(t,x)$. 

Let $\{e_{1},\dots,e_{d}\}$ be the standard basis of $\mathbb{R}^{d}$. Then $u$ is called \emph{partially vertically differentiable} if for all $i\in\{1,\dots,d\}$ and each $(t,x)\in [0,T)\times\widetilde{S}$, the map
\[
\mathbb{R}\rightarrow\mathbb{R}^{k},\quad h\mapsto u(t,x + he_{i}\mathbbm{1}_{[t,T]})
\]
is differentiable at $0$. Its derivative at $0$, which is called the \emph{$i$-th partial vertical derivative} of $u$ at $(t,x)$, will be written $\partial_{x_{i}} u(t,x)$. By calculus, if $u$ is vertically differentiable, then $u$ is partially vertically differentiable and $\partial_{x} u = (\partial_{x_{1}}u,\dots,\partial_{x_{d}} u)$.

For $k=1$ we say that the function $u$ is twice vertically differentiable if $u$ is vertically differentiable and the same is true for $\partial_{x}u$. In this case, we set
\[
\partial_{xx}u := \partial_{x}(\partial_{x}u)\quad\text{and}\quad \partial_{x_{i}x_{j}} u := \partial_{x_{i}} (\partial_{x_{j}} u) \quad \text{for all $i,j\in\{1,\dots,d\}$.}
\]
As a matter of fact, Schwarz's Lemma entails that whenever $u$ is twice vertically differentiable and $\partial_{xx}u$ is continuous with respect to $d_{\infty}$, then $\partial_{x_{j}x_{i}} u$ $ = \partial_{x_{i}x_{j}}u$ for all $i,j\in\{1,\dots,d\}$. Put differently, in this case $\partial_{xx}u$ is $\mathbb{S}^{d}$-valued, where $\mathbb{S}^{d}$ denotes the set of all real symmetric $d\times d$ matrices.

We define $C_{b}^{1,2}([0,T)\times\widetilde{S})$ to be the linear space of all $u\in C_{b}([0,T)\times\widetilde{S})$ that are once horizontally differentiable and twice vertically differentiable such that
\[
\partial_{t}u,\, \partial_{x_{i}}u,\, \partial_{x_{i}x_{j}} u \in C_{b}([0,T)\times\widetilde{S}) \quad \text{for all  $i,j\in\{1,\dots,d\}$.}
\]
Moreover, let $C_{b}^{1,2}([0,T)\times S)$ denote the linear space of all $u:[0,T)\times S\rightarrow\mathbb{R}$ for which there is $\widetilde{u}\in C_{b}^{1,2}([0,T)\times\widetilde{S})$ satisfying $u=\widetilde{u}$ on $[0,T)\times S$. The motivation of the latter space comes from the following fact. Let $u\in C_{b}^{1,2}([0,T)\times S)$ and suppose that $\widetilde{u}\in C_{b}^{1,2}([0,T)\times\widetilde{S})$ is an extension of $u$ to $[0,T)\times\widetilde{S}$. Then it follows from Theorem 2.3 in Fourni{\'e}~\cite{ItoCalculus} and the functional It{\^o} formula in~\cite{ItoFormula} that the definitions
\[
\partial_{t}u:= \partial_{t}\widetilde{u}, \quad \partial_{x}u := \partial_{x}\widetilde{u}, \quad \text{and} \quad \partial_{xx}u:= \partial_{xx}\widetilde{u} \quad \text{on } [0,T)\times S 
\]
are independent of the choice of $\widetilde{u}$. This has already been noted in~\cite{PPDEViscosity}[Theorem 2.4].

\subsection{The parabolic terminal value problem}\label{TVP Section}

In what follows, let $a:[0,T)\times S\rightarrow\mathbb{S}_{+}^{d}$ and $b:[0,T)\times S\rightarrow\mathbb{R}^{d}$ be two non-anticipative Borel measurable maps such that $a(\cdot,x)$ and $b(\cdot,x)$ are locally integrable for each $x\in S$. For example, this condition is satisfied if $a$ and $b$ are locally bounded. Here, $\mathbb{S}_{+}^{d}$ stands for the set of all positive definite matrices in $\mathbb{S}^{d}$. To $a$ and $b$ we associate the linear differential operator $\mathscr{L}:C_{b}^{1,2}([0,T)\times S)\rightarrow B([0,T)\times S)$ defined via
\begin{equation*}
\mathscr{L}(\varphi)(t,x) := \frac{1}{2}\sum_{i,j=1}^{d} a_{i,j}(t,x) \partial_{x_{i}x_{j}}\varphi(t,x) + \sum_{i=1}^{d}b_{i}(t,x) \partial_{x_{i}}\varphi(t,x).
\end{equation*}
Let $D\subset\mathbb{R}$ be a non-degenerate interval and $f:[0,T)\times S\times D\rightarrow\mathbb{R}$ be non-anticipative in the sense that $f(t,x,z)$ $= f(t,x^{t},z)$ for all $(t,x,z)\in [0,T)\times S\times D$. Moreover, let $f$ be $\mathscr{B}([0,T)\times S)\otimes\mathscr{B}(D)$-measurable and $g:S\rightarrow D$ be Borel measurable and bounded.

In this paper, we analyze the following semilinear parabolic path-dependent PDE combined with a terminal value condition:
\begin{equation}\label{Parabolic Terminal Value Problem}\tag{P}
\left\{
\begin{array}{rll}
(\partial_{t} + \mathscr{L})(u)(t,x) &= f(t,x,u(t,x)) &\text{for $(t,x)\in [0,T)\times S$,}\\
u(T,x) &= g(x) &\text{for $x\in S$.}
\end{array}
\right.
\end{equation}
By a \emph{classical subsolution} (resp.~\emph{supersolution}) to \eqref{Parabolic Terminal Value Problem} in $C_{b}^{1,2}([0,T)\times S)$ we mean a $D$-valued function $u\in C_{b}^{1,2}([0,T)\times S)\cap C([0,T]\times S)$ such that
\[
(\partial_{t} + \mathscr{L})(u)(t,x) \geq \text{(resp. $\leq$)}\,\, f(t,x,u(t,x)) \quad \text{and}\quad u(T,x) \leq \text{(resp. $\geq $)}\,\, g(x)
\]
for all $(t,x)\in [0,T)\times S$. Correspondingly, a \emph{classical solution} to \eqref{Parabolic Terminal Value Problem} in $C_{b}^{1,2}([0,T)\times S)$ is a $D$-valued $u\in C_{b}^{1,2}([0,T)\times S)\cap C([0,T]\times S)$ that is a classical sub- and supersolution to \eqref{Parabolic Terminal Value Problem} in the same space. 

However, classical solutions may not exist in many applications. This can already be seen from \eqref{mild solution integral eq intro} in the linear case $f=0$, where martingale arguments yield the representation 
$u(r,x)$ $=E_{r,x}[g(X^T)]$ for all $(r,x)\in [0,T]\times S$ whenever $u$ is a classical solution in $C_{b}^{1,2}([0,T)\times S)$. Since $X^T$ coincides $P_{r,x}$-a.s.~with the function $x$ up to time $r$, one sees that even continuity of the terminal condition $g$ may not suffice if $u$ shall belong to $C_{b}^{1,2}([0,T)\times S)$. For example, let $g$ be  of the form $g(x) = \overline{g}(x(t))$ for all $x\in S$, some $D$-valued $\overline{g}\in C_{b}(\mathbb{R}^{d})$ with $\overline{g}\neq 0$, and some $t\in [0,T)$, then $u\in C_{b}^{1,2}([0,T)\times S)$ implies that $\overline{g}\in C_{b}^{2}(\mathbb{R}^{d})$.

But such a requirement is too strong for many applications. For instance, in mathematical finance, $g$ could correspond to the payoff of a path-dependent derivative, and these payoffs are often not smooth but merely continuous functions of the underlying path. For this reason,  it is natural to focus on \emph{weak} solutions to \eqref{Parabolic Terminal Value Problem}. Here, our main interest is in  mild and viscosity solutions, which will be introduced in the next two sections.  Existence and uniqueness results for classical solutions were given by Peng and Wang \cite{Peng2011} and Ji and Yang \cite{Ji2013}.

\subsection{Diffusion processes, mild solutions, and a control problem}\label{mild solution section}

In what follows, we require the notion of an $\mathscr{L}$-diffusion process. At first, a \emph{path-dependent diffusion process} on some measurable space $(\Omega,\mathscr{F})$ is a triple $\mathscr{X}=(X,(\mathscr{F}_{t})_{t\in [0,T]},\mathbb{P})$ that is composed of a continuous process $X:[0,T]\times\Omega\rightarrow\mathbb{R}^{d}$, a filtration $(\mathscr{F}_{t})_{t\in [0,T]}$ of $\mathscr{F}$ to which $X$ is adapted, and a set $\mathbb{P}=\{P_{r,x}\,|\, (r,x)\in [0,T]\times S\}$ of probability measures on $(\Omega,\mathscr{F})$ such that for the path process $\hat{X}$ of $X$ given by $\hat{X}_{t}:= X^{t}$ for all $t\in [0,T]$ the triple
\[
\hat{\mathscr{X}}:=(\hat{X},(\mathscr{F}_{t})_{t\in [0,T]},\mathbb{P})
\]
is a non-anticipative diffusion process on $(\Omega,\mathscr{F})$ with state space $S$. As $\hat{X}$ is automatically continuous, this results in the additional requirement that $\hat{\mathscr{X}}$ is a non-anticipative strong Markov process. That means, the subsequent three conditions hold:
\begin{enumerate}[(i)]
\item $P_{r,x} = P_{r,x^{r}}$ and $\hat{X}_{r} = x^{r}$ $P_{r,x}$-a.s. for each $(r,x)\in [0,T]\times S$.
\item The function $[0,t]\times S\rightarrow [0,1]$, $(s,y)\mapsto P_{s,y}(\hat{X}_{t}\in B)$ is Borel measurable for all $t\in [0,T]$ and each $B\in\mathscr{S}_{T}$.
\item $P_{r,x}(\hat{X}_{t}\in B|\mathscr{F}_{\tau}) = P_{\tau,\hat{X}_{\tau}}(\hat{X}_{t}\in B)$ $P_{r,x}$-a.s.~for all $r,t\in [0,T]$ with $r\leq t$, each finite $(\mathscr{F}_{s})_{s\in [r,t]}$-stopping time $\tau$, every $x\in S$, and all $B\in\mathscr{S}_{T}$.
\end{enumerate}
This notion includes in particular the class of path or historical processes used by Dawson and Perkins~\cite{HistProcessesPerkins} and Dynkin~\cite{PathProcesses} for constructing historical superprocesses; see Example~\ref{Ordinary Diffusion Processes} below for details. Furthermore, an $\mathscr{L}$\emph{-diffusion process} is a path-dependent diffusion process $\mathscr{X}$ with the $\mathscr{L}$-martingale property:
\begin{enumerate}[$(\mathscr{L})$]
\item The process $[r,T)\times\Omega\rightarrow\mathbb{R}$, $(t,\omega)\mapsto \varphi(t,X^{t}(\omega)) - \int_{r}^{t}(\partial_{s} + \mathscr{L})(\varphi)(s,X^{s}(\omega))\,ds$ is an $(\mathscr{F}_{t})_{t\in [r,T)}$-martingale under $P_{r,x}$ for each $(r,x)\in [0,T)\times S$ and every $\varphi\in C_{b}^{1,2}([0,T)\times S)$.
\end{enumerate}

We suppose that $\mathscr{X}$ is an $\mathscr{L}$-diffusion process on some measurable space $(\Omega,\mathscr{F})$ and $u$ is a classical subsolution (resp.~supersolution) to \eqref{Parabolic Terminal Value Problem} in $C_{b}^{1,2}([0,T)\times S)$. Then we can accomplish that
\begin{align*}
E_{r,x}[u(t\wedge \tau,X^{t\wedge\tau})]  - u(r,x) &= E_{r,x}\bigg[\int_{r}^{t\wedge\tau}(\partial_{s} + \mathscr{L})(u)(s,X^{s})\,ds\bigg]\\
&\geq \text{(resp. $\leq$)}\,\, E_{r,x}\bigg[\int_{r}^{t\wedge\tau}f(s,X^{s},u(s,X^{s}))\,ds\bigg]
\end{align*}
for all $r,t\in [0,T)$ with $r\leq t$, every $(\mathscr{F}_{s})_{s\in [r,T]}$-stopping time $\tau$, and each $x\in S$, due to optional sampling. Hence, if $\tau$ is finite and $\int_{r}^{\tau}|f(s,X^{s},u(s,X^{s}))|\,ds$ is a finite $P_{r,x}$-integrable function, then we may take the limit $t\uparrow T$ to obtain that
\[
E_{r,x}[u(\tau,X^{\tau})] - u(r,x) \geq \text{(resp. $\leq$)}\,\, E_{r,x}\bigg[\int_{r}^{\tau}f(s,X^{s},u(s,X^{s}))\,ds\bigg],
\]
by dominated convergence. This motivates notions of mild sub- and supersolutions as well as mild solutions to \eqref{Parabolic Terminal Value Problem}.

\begin{Definition}
A \emph{mild subsolution} (resp.~\emph{supersolution}) to \eqref{Parabolic Terminal Value Problem} is a $D$-valued non-anticipative function $u\in B([0,T]\times S)$ for which $|u(\tau,X^{\tau})| + \int_{r}^{\tau}|f(s,X^{s},u(s,X^{s}))|\,ds$ is finite and $P_{r,x}$-integrable such that
\[
E_{r,x}[u(\tau,X^{\tau})] - u(r,x) \geq \text{(resp. $\leq$)}\,\, E_{r,x}\bigg[\int_{r}^{\tau}f(s,X^{s},u(s,X^{s}))\,ds\bigg]
\]
for all $(r,x)\in [0,T]\times S$ and each finite $(\mathscr{F}_{t})_{t\in [r,T]}$-stopping time $\tau$. In addition, we require that $u(T,x) \leq \text{(resp.~$\geq $) } g(x)$ for every $x\in S$. A \emph{mild solution} to \eqref{Parabolic Terminal Value Problem} is a $D$-valued $u\in B([0,T]\times S)$ that is a mild sub- and supersolution to \eqref{Parabolic Terminal Value Problem}.
\end{Definition}

The strong Markov property of $\hat{\mathscr{X}}$ gives a characterization of mild solutions.

\begin{Lemma}\label{Characterization of Mild Solutions}
A $D$-valued function $u\in B([0,T]\times S)$ is a mild solution to \eqref{Parabolic Terminal Value Problem} if and only if $\int_{r}^{T}|f(s,X^{s},u(s,X^{s}))|\,ds$ is a finite $P_{r,x}$-integrable function such that
\begin{equation}\label{mild sol int eq}
E_{r,x}[g(X^{T})] = u(r,x) + E_{r,x}\bigg[\int_{r}^{T}f(s,X^{s},u(s,X^{s}))\,ds\bigg]
\end{equation}
for all $(r,x)\in [0,T]\times S$.
\end{Lemma}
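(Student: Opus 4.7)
The plan is to exploit the strong Markov property of the path process $\hat{\mathscr{X}}$ together with the tower property of conditional expectation, treating the two implications separately.

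For necessity, I specialize the defining inequalities with the deterministic finite stopping time $\tau=T$. Because $u$ is simultaneously a mild sub- and supersolution, the two resulting inequalities collapse to the equality
\[
E_{r,x}[u(T,X^{T})]-u(r,x)=E_{r,x}\!\left[\int_{r}^{T}f(s,X^{s},u(s,X^{s}))\,ds\right],
\]
while the terminal conditions $u(T,\cdot)\leq g$ and $u(T,\cdot)\geq g$ force $u(T,\cdot)=g$, so the right-hand expectation may be rewritten as $E_{r,x}[g(X^{T})]$; this is \eqref{mild sol int eq}.

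For sufficiency, I first take $r=T$ in \eqref{mild sol int eq}: since $X^{T}=x^{T}=x$ $P_{T,x}$-a.s.\ by condition (i), the integral disappears and we recover $u(T,x)=g(x)$, the terminal condition with equality. Next, fix $(r,x)\in[0,T]\times S$ and a finite $(\mathscr{F}_{t})_{t\in[r,T]}$-stopping time $\tau$. The idea is to evaluate \eqref{mild sol int eq} at the random point $(\tau,\hat{X}_{\tau})$ and then take $E_{r,x}[\cdot]$. By the strong Markov property (condition (iii)) applied to the $\mathscr{S}_{T}$-measurable random variable $g(X^{T})$, together with a Fubini argument for the random functional $\int_{\tau}^{T}f(s,X^{s},u(s,X^{s}))\,ds$ which is permissible because the integrand is progressively measurable (by non-anticipation and the measurability hypotheses on $f$ and $u$), we obtain
\[
E_{r,x}[g(X^{T})\mid\mathscr{F}_{\tau}]=E_{\tau,\hat{X}_{\tau}}[g(X^{T})],\qquad E_{r,x}\!\left[\int_{\tau}^{T}f\cdots ds\,\Big|\,\mathscr{F}_{\tau}\right]=E_{\tau,\hat{X}_{\tau}}\!\left[\int_{\tau}^{T}f\cdots ds\right].
\]
Substituting \eqref{mild sol int eq} at $(\tau,\hat{X}_{\tau})$ and taking $E_{r,x}$ through the tower property yields
\[
E_{r,x}[u(\tau,X^{\tau})]+E_{r,x}\!\left[\int_{\tau}^{T}f(s,X^{s},u(s,X^{s}))\,ds\right]=E_{r,x}[g(X^{T})].
\]
Subtracting this from \eqref{mild sol int eq} at $(r,x)$ (after splitting $\int_{r}^{T}=\int_{r}^{\tau}+\int_{\tau}^{T}$) produces the sub/supersolution identity with equality, and the required integrability of $|u(\tau,X^{\tau})|$ drops out from the representation of $u(\tau,X^{\tau})$, the boundedness of $g$, and the hypothesis that $\int_{r}^{T}|f|\,ds$ is $P_{r,x}$-integrable.

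The main obstacle I anticipate is the rigorous justification of the strong Markov identity for the random functional $\int_{\tau}^{T}f(s,X^{s},u(s,X^{s}))\,ds$: one must confirm that the integrand, as a function of $(s,\omega)$, is progressively measurable (which follows from condition (ii), the measurability of $f$ and $u$, and the non-anticipation property), and then interchange a Lebesgue integral with a conditional expectation via Fubini's theorem using the integrability hypothesis. Everything else is bookkeeping driven by the tower property.
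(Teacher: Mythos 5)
Your argument is correct and follows essentially the same route as the paper: necessity by taking $\tau=T$, and sufficiency by applying the strong Markov property of $\hat{\mathscr{X}}$ at the stopping time $\tau$ to the representation \eqref{mild sol int eq}, which is exactly what the paper does via the auxiliary function $\phi(s,y)=g(y)-\int_{s}^{T}f(t,y^{t},u(t,y^{t}))\,dt$. The only detail you leave implicit (and in fact tacitly use when invoking progressive measurability) is that in the sufficiency direction $u$ must also be shown to be non-anticipative, as the definition of a mild solution requires; the paper settles this in one line by noting that $P_{r,x}=P_{r,x^{r}}$, so \eqref{mild sol int eq} gives $u(r,x)=u(r,x^{r})$.
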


Since we deal with a non-degenerate interval $D$, we cite the following existence result from \cite{Kalinin}[Theorem 2.11] that is a direct consequence of the Markov property of $\hat{\mathscr{X}}$ and the definition of a global solution to a Markovian integral equation. Here, we say that $\hat{\mathscr{X}}$ is (right-hand) Feller if the function 
\[
[0,t]\times S\rightarrow\mathbb{R},\quad (r,x)\mapsto E_{r,x}[\varphi(X^{t})]
\]
is (right-)continuous for all $t\in [0,T]$ and each $\varphi\in C_{b}(S)$. Moreover, we call $f$ right-continuous in the same sense as before. That is, for each $(r,x,z)\in [0,T)\times S\times D$ and every $\varepsilon > 0$, there is $\delta > 0$ such that $|f(s,y,z') - f(r,x,z)| < \varepsilon$ for all $(s,y,z')\in [r,T)\times S\times D$ with $d_{S}((s,y),(r,x)) + |z'-z| < \delta$. If $f$ is merely right-continuous at each $(r,x,z)\in N^{c}\times S\times D$, where $N\in\mathscr{B}([0,T))$ is a Lebesgue-null set, then we will say that $f$ is a.s.~right-continuous.

\begin{Theorem}\label{mild sol existence thm}
Let $\mathscr{X}$ be an $\mathscr{L}$-diffusion process, $\underline{d}:=\inf D$, and $\overline{d}:=\sup D$. Suppose that $f$ satisfies the following three conditions:
\begin{enumerate}[(i)]
\item For each $\hat{z}\in\overline{D}$ there are $\delta > 0$ and two integrable $\kappa,\lambda:[0,T)\rightarrow\mathbb{R}_{+}$ such that $|f(\cdot,x,z)|$ $\leq \kappa$ and $|f(\cdot,x,z) - f(\cdot,x,z')| \leq \lambda|z-z'|$ for all $x\in S$ and each $z,z'\in (\hat{z}-\delta,\hat{z}+\delta)\cap D$ a.s.~on $[0,T)$.
\item If $\underline{d}=-\infty$ (resp.~$\overline{d}=\infty$), then there are two integrable $\alpha,\beta:[0,T)\rightarrow\mathbb{R}_{+}$ with $f(\cdot,x,z)$ $\leq \alpha + \beta|z|$ (resp.~$f(\cdot,x,z) \geq - \alpha - \beta|z|$) for all $(x,z)\in S\times D$ a.s.~on $[0,T)$.
\item Whenever $\underline{d}> -\infty$ (resp. $\overline{d} < \infty$), then $\lim_{z\downarrow\underline{d}} f(\cdot,x,z)\leq 0$ (resp. $\lim_{z\uparrow\overline{d}} f(\cdot,x,z)\geq 0$) for all $x\in S$ a.s.~on $[0,T)$. 
\end{enumerate}
Then there is a unique bounded mild solution $u$ to \eqref{Parabolic Terminal Value Problem}. Moreover, if $\hat{\mathscr{X}}$ is (right-hand) Feller, $f$ is a.s.~right-continuous, and $g\in C_{b}(S)$, then $u$ is (right-)continuous.
\end{Theorem}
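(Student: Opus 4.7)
The plan is to recast the problem via the Markovian integral equation \eqref{mild sol int eq} of Lemma~\ref{Characterization of Mild Solutions} and to solve it by Picard iteration, combining a local-in-time Banach fixed-point argument with a priori bounds that prevent blow-up and confine the solution to $D$.

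I would first introduce the operator
\[
\Phi(u)(r,x) := E_{r,x}[g(X^{T})] - E_{r,x}\Bigl[\int_{r}^{T} f(s,X^{s},u(s,X^{s}))\,ds\Bigr]
\]
on a suitable set of bounded, non-anticipative, Borel measurable $u:[0,T]\times S\to D$. Because $\hat{\mathscr{X}}$ is a non-anticipative strong Markov process whose kernel $(r,x)\mapsto P_{r,x}$ depends Borel measurably on $(r,x)$, $\Phi(u)$ is again non-anticipative and Borel measurable; boundedness of $g$ together with condition~(i) makes the right-hand side well defined. Mild solutions to \eqref{Parabolic Terminal Value Problem} are then precisely the fixed points of $\Phi$.

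For local existence near the terminal time I would fix a reference value $\hat z\in D$ and restrict $\Phi$ to the set of $u$ with $|u-\hat z|\le\delta$, where $\delta$ comes from condition~(i). Endowing this set with a weighted supremum norm of the form $\|u\|_{\gamma}:=\sup_{(r,x)} e^{-\gamma(T-r)}|u(r,x)|$ and invoking the integrable local Lipschitz constant $\lambda$ from~(i), a standard computation shows that $\Phi$ is a strict contraction in $\|\cdot\|_{\gamma}$ for $\gamma$ sufficiently large on an interval $[T-h,T]\times S$ with $h$ small. Banach's fixed-point theorem then yields a unique local mild solution.

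To extend the solution to all of $[0,T]$, I would run a non-extendibility argument on its maximal existence interval. If $D$ is unbounded above or below, condition~(ii) combined with Gronwall's inequality applied to $r\mapsto\sup_{s\ge r,\,x}|u(s,x)|$ yields an a priori bound in terms of $\sup|g|$, $\alpha$, and $\beta$, thereby excluding blow-up. To keep $u$ inside $D$ when, say, $\overline d<\infty$, the inward-pointing condition~(iii), $\lim_{z\uparrow\overline d} f(\cdot,x,z)\ge 0$, combined with optional sampling applied to the $\mathscr{L}$-martingale property, produces a supermartingale-type comparison that forces the Picard iterates — and hence their limit — to stay below $\overline d$; a symmetric argument treats a finite $\underline d$. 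Global uniqueness then follows from Gronwall's inequality applied to $|u_{1}-u_{2}|$ against $\lambda$.

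For the continuity assertion I would initialise Picard iteration at $u_{0}(r,x):=E_{r,x}[g(X^{T})]$, which is (right-)continuous by the Feller hypothesis on $\hat{\mathscr{X}}$ together with $g\in C_{b}(S)$. A.s.~right-continuity of $f$, continuity of the paths of $\hat X$, dominated convergence, and again the Feller property show inductively that each $u_{n+1}=\Phi(u_{n})$ is (right-)continuous, and the uniform convergence from the contraction transports this property to the fixed point.

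The genuinely delicate step is not the Picard contraction itself but the interplay between confining $u$ to $D$ and globally extending it when $D$ has finite endpoints: conditions (i)--(ii) alone do not prevent $u$ from crossing $\partial D$, so condition~(iii) has to be exploited through a careful comparison on the Polish path space by means of the $\mathscr{L}$-martingale property and optional sampling, which is the main technical obstacle in the proof.
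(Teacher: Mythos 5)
First, a point of reference: this paper does not prove Theorem~\ref{mild sol existence thm} at all; it is imported verbatim from the companion paper \cite{Kalinin}[Theorem 2.11], and the introduction only indicates the strategy, namely Picard iteration combined with non-extendibility arguments for the Markovian integral equation \eqref{mild sol int eq}. Your overall architecture --- rewriting mild solutions as fixed points of the operator $\Phi$ via Lemma~\ref{Characterization of Mild Solutions}, a local contraction in a weighted supremum norm near the terminal time, a continuation/a priori bound step using (ii), and transport of (right-)continuity along the Picard iterates using the Feller property and $g\in C_{b}(S)$ --- is consistent with that advertised strategy, so as a skeleton it is the right kind of proof.

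There are, however, two concrete gaps. The more serious one is your mechanism for confining the solution to $D$ when $\underline{d}$ or $\overline{d}$ is finite: you propose a ``supermartingale-type comparison'' obtained from the $\mathscr{L}$-martingale property and optional sampling. The $\mathscr{L}$-martingale property $(\mathscr{L})$ applies only to test functions $\varphi\in C_{b}^{1,2}([0,T)\times S)$; neither the Picard iterates nor the prospective mild solution have any such smoothness (indeed the whole point of the theorem is to avoid it), so there is no martingale to which optional sampling could be applied. The invariance of $D$ has to be extracted directly from the integral equation \eqref{mild sol int eq}, using the sign condition (iii) together with the local bounds in (i) in a deterministic comparison/Gronwall argument for the scalar map $r\mapsto u(r,\cdot)$ (this is how the companion paper treats it); as written, your ``delicate step'' rests on a tool that is not available. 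Secondly, your local contraction is set up on a ball $\{|u-\hat z|\le\delta\}$ around a single $\hat z$ with the $\delta$ from (i); since $g$ is only bounded and $D$-valued, its range need not lie in any such ball, and $\Phi$ need not map that ball into itself. You need a covering argument: the closure of the relevant range of values is a compact subset of $\overline{D}$, finitely many neighborhoods from (i) cover it, and one obtains integrable $\kappa,\lambda$ uniform over that compact set before running the weighted-norm contraction. Neither repair is difficult, but both are exactly the points where conditions (i)--(iii) must be used, so the sketch as it stands does not yet carry the load it assigns to them.
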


For instance, the following case is included: $D=\mathbb{R}_{+}$ and $f(t,x,z) = \alpha(t,x)f(z)$ for every $(t,x,z)\in [0,T)\times S\times\mathbb{R}_{+}$ with non-negative and non-anticipative $\alpha\in B_{b}([0,T)\times S)$ as well as locally Lipschitz continuous $f:\mathbb{R}_{+}\rightarrow\mathbb{R}$ that is bounded from below and satisfies $f(0)\leq 0$. In particular, one can take $f(z) = z^{p}$ for all $z\in\mathbb{R}_{+}$ with $p \geq 1$ (see \cite{Kalinin} for further examples).


\begin{Remark}{\bf (Existence of mild solutions)}\label{mild sol existence rem}
Let $D=\mathbb{R}_{+}$. Based on \eqref{mild sol int eq}, the existence of mild solutions to \eqref{Parabolic Terminal Value Problem} can be proved by standard Picard iteration; see, e.g., Watanabe~\cite[Proposition 2.2]{Watanabe}, Fitzsimmons~\cite[Proposition 2.3]{Fitzsimmons}, Iscoe~\cite[Theorem A]{Iscoe} for corresponding results with various degrees of generality. 

While in \cite{Watanabe,Fitzsimmons,Iscoe} the existence of mild solutions is used to construct superprocesses, Dynkin~\cite{BranchingParticle,PathProcesses,DynkinBranchingBook} derives mild solutions to \eqref{Parabolic Terminal Value Problem} from a probabilistic construction of superprocesses. This works for  
nonlinearities of the form
\begin{equation}\label{Superprocess Function}
f(t,x,z) = \alpha(t,x)z +  \gamma(t,x)z^{2} + \int_{0}^{\infty} (e^{-uz} - 1 + uz)\,n(t,x,du)
\end{equation}
for each $(t,x,z)\in [0,T)\times S\times\mathbb{R}_{+}$. Here, $\alpha,\gamma\in B_{b}([0,T)\times S)$ are non-negative and non-anticipative, and $n$ is a Markov kernel from $[0,T)\times S$ to $(0,\infty)$ for which the function $[0,T)\times S\rightarrow [0,\infty]$, $(t,x)\mapsto \int_{(0,\infty)} u\wedge u^{2}\, n(t,x,du)$ is bounded. Note that  \eqref{Superprocess Function} includes as special case functions of the form 
\[
f(t,x,z) = \alpha(t,x)z+ \sum_{i=1}^{n}  \beta_{i}(t,x)z^{v_{i}} +  \gamma(t,x)z^{2}
\]
for all $(t,x,z)\in [0,T)\times S\times\mathbb{R}_{+}$, where $n\in\mathbb{N}$, $v_{1},\dots,v_{n}\in (1,2)$, and $\beta_{1},\dots,\beta_{n}\in B_b([0,T)\times S)$ are non-negative and non-anticipative.

Finally, we mention that for $D=\mathbb{R}$ mild solutions can also be constructed by means of backward stochastic differential equations \cite{PPDEViscosity,Peng2011,Ji2013}, but this method typically requires global Lipschitz continuity of the function $\mathbb{R}\rightarrow\mathbb{R}$, $z\mapsto f(t,x,z)$, which in particular excludes the case $f(z)=|z|^p$ for all $z\in\mathbb{R}$ with $p>1$.
\end{Remark}

It was observed in \cite{SchiedFuel,NeumanSchied} that the value functions and the optimal strategies in certain classes of stochastic optimal control problems can be represented in terms of mild solutions to semilinear terminal value problems. We illustrate this idea with the following proposition, which partly extends \cite[Theorem 2.8]{SchiedFuel} to all powers $p>1$ (note, however, that we assume bounds on the coefficients of the cost functional so as to keep the exposition simple; the reader will have no difficulties in relaxing these bounds if needed). We refer to \cite{Almgren,Forsythetal,NeumanSchied} for a financial motivation of the control problem and to \cite{AnkirchnerJeanblancKruse,GraeweHorstQiu} for   solution methods by means of BSDEs. Since  value functions of control problems are typically viscosity solutions of the corresponding HJB equations, the following proposition also motivates our main research question, namely whether a mild solution to \eqref{Parabolic Terminal Value Problem} is also a viscosity solution.

\begin{Proposition}\label{control prop}Suppose that $p>1$, $q$ is its dual exponent defined by $\frac1p+\frac1q=1$, $D=\mathbb{R}_{+}$, and $\alpha,\eta\in B_{b}([0,T)\times S)$ are non-negative, where $\eta$ is bounded away from zero. Let $f$ be of the form
\[
f(t,x,z)=-\alpha(t,x)+\frac{z^q}{(q-1)\eta(t,x)^{q-1}}
\]
for all $(t,x,z)\in [0,T)\times S\times\mathbb{R}_{+}$ and $u$ be the unique mild solution to \eqref{Parabolic Terminal Value Problem}. If $u$ is right-continuous on $[0,T)\times S$, then
\[
\nu^{*}(t)=\nu_{0}\exp\bigg(-\int_{0}^{t}\Big(\frac{u(s,X^{s})}{\eta(s,X^{s})}\Big)^{q-1}\,ds\bigg)
\]
is the $P_{0,x}$-a.s.~unique minimizer of the cost functional 
\[
J(\nu)=E_{0,x}\bigg[\int_{0}^{T}\Big(|\dot\nu(t)|^{p}\eta(t,X^{t})+|\nu(t)|^{p}\alpha(t,X^{t})\Big)\,dt+g(X^{T})|\nu(T)|^{p}\bigg]
\]
within the class of control processes $(\nu(t))_{t\in [0,T]}$ that are of the form $\nu(t)=\nu_0+\int_0^t\dot\nu(s)\,ds$ for some given constant $\nu_0\in\mathbb{R}$ and a progressive and integrable process $(\dot\nu(t))_{t\in [0,T]}$.
\end{Proposition}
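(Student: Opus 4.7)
The plan is to derive the identity
\begin{equation*}
J(\nu) = |\nu_0|^p u(0,x) + E_{0,x}\bigg[\int_0^T\Big(|\dot\nu(t)|^p\eta(t,X^t) + p\, u(t,X^t)|\nu(t)|^{p-1}\mathrm{sgn}(\nu(t))\dot\nu(t) + \frac{u(t,X^t)^q|\nu(t)|^p}{(q-1)\eta(t,X^t)^{q-1}}\Big)dt\bigg]
\end{equation*}
for every admissible $\nu$ with $J(\nu)<\infty$, and then to show via Young's inequality that the integrand is pointwise non-negative, with equality precisely along a uniquely determined ODE whose solution is $\nu^*$. Controls with infinite cost are trivially not minimizers.

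For the identity, I set $\hat M(t):=u(t,X^t)-\int_0^t f(s,X^s,u(s,X^s))\,ds$. Lemma~\ref{Characterization of Mild Solutions} applied at each deterministic time together with the strong Markov property of $\hat{\mathscr{X}}$ shows that $\hat M$ is a $P_{0,x}$-martingale on $[0,T]$; it is bounded because $u$ is bounded by Theorem~\ref{mild sol existence thm} and hence so is $f(\cdot,\cdot,u(\cdot,\cdot))$. Right-continuity of $u$ in $d_S$ and pathwise continuity of $X$ imply that $\hat M$ is right-continuous on $[0,T)$, with a possible jump at $T$. Since $|\nu(t)|^p$ is absolutely continuous with derivative $p|\nu|^{p-1}\mathrm{sgn}(\nu)\dot\nu$, the product formula applied to the semimartingale $u(t,X^t)=\hat M(t)+\int_0^t f\,ds$ and the finite-variation process $|\nu(t)|^p$ yields
\begin{equation*}
|\nu(T)|^p u(T,X^T)-|\nu_0|^p u(0,x)=\int_0^T|\nu|^p\,d\hat M+\int_0^T\!\Big(|\nu|^p f(s,X^s,u)+p\, u(s,X^s)|\nu|^{p-1}\mathrm{sgn}(\nu)\dot\nu\Big)ds.
\end{equation*}
Taking $P_{0,x}$-expectations, substituting $u(T,\cdot)=g$ and the explicit form of $f$, and using $E_{0,x}[\sup_t|\nu(t)|^p]<\infty$ (a Jensen consequence of $J(\nu)<\infty$ and $\inf\eta>0$) together with the Burkholder--Davis--Gundy inequality to conclude that $\int_0^\cdot|\nu|^p\,d\hat M$ is a true martingale, delivers the identity.

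Young's inequality $ab\le a^p/p+b^q/q$ with $a:=|\dot\nu|\eta^{1/p}$ and $b:=u|\nu|^{p-1}\eta^{-1/p}$, using $(p-1)q=p$ and $p/q=q-1$, gives
\begin{equation*}
p\,u|\nu|^{p-1}|\dot\nu|\le|\dot\nu|^p\eta+\frac{u^q|\nu|^p}{(q-1)\eta^{q-1}},
\end{equation*}
so the integrand in the identity is non-negative and $J(\nu)\ge|\nu_0|^p u(0,x)$. Equality requires both $a^p=b^q$, i.e.\ $|\dot\nu|=|\nu|(u/\eta)^{q-1}$, and the sign match $\mathrm{sgn}(\nu)\dot\nu=-|\dot\nu|$, which together force $\dot\nu=-\nu(u/\eta)^{q-1}$ wherever $\nu\ne 0$. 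This is a linear ODE with bounded progressive coefficient whose unique absolutely continuous solution is $\nu^*(t)=\nu_0\exp(-\int_0^t(u/\eta)^{q-1}\,ds)$; it is admissible, does not reach $0$ for $\nu_0\ne 0$, and a direct substitution shows it saturates the Young bound pointwise, so $J(\nu^*)=|\nu_0|^p u(0,x)$. Strict convexity in Young's inequality then yields the $P_{0,x}$-a.s.\ uniqueness.

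The main obstacle is the rigorous justification of the product formula and the true-martingale property of $\int_0^\cdot|\nu|^p\,d\hat M$ under the merely right-continuous regularity of $u$: no functional It\^o formula is available because $u\notin C_b^{1,2}$. The mild-solution integral equation of Lemma~\ref{Characterization of Mild Solutions} and the strong Markov property of $\hat{\mathscr{X}}$ together substitute for It\^o's formula by providing the semimartingale decomposition $u(t,X^t)=\hat M(t)+\int_0^tf\,ds$ directly, and boundedness of $\hat M$ then reduces the true-martingale step to a standard Burkholder--Davis--Gundy argument controlled by $E_{0,x}[\sup_t|\nu(t)|^p]$.
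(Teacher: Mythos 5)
Your proof follows essentially the same route as the paper's: your $\hat M(t)=u(t,X^t)-\int_0^t f(s,X^s,u(s,X^s))\,ds$ is exactly the martingale $M$ the paper builds from Lemma~\ref{Characterization of Mild Solutions} and the Markov property, your product formula for $|\nu(t)|^p u(t,X^t)$ is the paper's It\^o step for $C^\nu_t$, and your Young inequality is precisely the paper's statement $\Phi_p(y,z)=y^p-pyz^{p-1}+(p-1)z^p\ge 0$ with equality iff $y=z$, leading to the same identity $J(\nu)=|\nu_0|^pu(0,x)+E_{0,x}[\int_0^T\eta\,\Phi_p(\dots)\,dt]$ and the same uniqueness conclusion. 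The genuine difference is in the reduction and the integrability bookkeeping. The paper first invokes the argument of \cite[Lemma 4.1]{SchiedFuel} to restrict w.l.o.g.\ to $\nu_0>0$ and non-increasing, non-negative controls; this makes $|\nu(t)|\le\nu_0$ bounded, so the stochastic-integral term is a true martingale with no effort, and it then proves $\lim_{t\uparrow T}u(t,X^t)=g(X^T)$ $P_{0,x}$-a.s.\ (martingale convergence) to pass to the terminal time in $C^\nu_t$. You instead keep general signed controls, handling the sign through $\mathrm{sgn}(\nu)\dot\nu$ and the sign-match condition in the equality analysis — this is fine and even makes the argument self-contained — but then your true-martingale step is too quick as stated: BDG gives $E_{0,x}[\sup_t|\int_0^t|\nu|^p\,d\hat M|]\le C\,E_{0,x}[\sup_t|\nu(t)|^p[\hat M]_T^{1/2}]$, and since $[\hat M]_T$ is only integrable (not bounded), Cauchy--Schwarz would require $E_{0,x}[\sup_t|\nu(t)|^{2p}]<\infty$, which Jensen and $J(\nu)<\infty$ do not provide. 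This is a fixable technicality rather than a wrong idea: either localize at $\sigma_k=\inf\{t:|\nu(t)|\ge k\}\wedge T$ and pass to the limit using the Young-type domination of the drift integrand together with $E_{0,x}[\sup_t|\nu(t)|^p]<\infty$ and the boundedness of $u$ and $g$, or simply perform the paper's preliminary reduction to monotone non-negative (hence bounded) controls. Your treatment of a possible jump of $\hat M$ at $T$ is also acceptable (the product rule with a continuous finite-variation integrator tolerates it), though the paper's a.s.\ convergence $u(t,X^t)\to g(X^T)$ shows there is in fact no jump.
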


We conclude this section with the following example, which explains how the path process of a standard Markovian diffusion process fits into our framework of $\mathscr{L}$-diffusion processes.

\begin{Example}\label{Ordinary Diffusion Processes}
Assume that there are two Borel measurable maps $\overline{a}:[0,T)\times\mathbb{R}^{d}\rightarrow\mathbb{S}_{+}^{d}$ and $\overline{b}:[0,T)\times\mathbb{R}^{d}\rightarrow\mathbb{R}^{d}$ such that $a(t,x) = \overline{a}(t,x(t))$ and $b(t,x) = \overline{b}(t,x(t))$ for all $(t,x)\in [0,T)\times S$. With $\overline{a}$ and $\overline{b}$ we can link the linear differential operator $\overline{\mathscr{L}}:C^{0,2}([0,T)\times\mathbb{R}^{d})\rightarrow B([0,T)\times\mathbb{R}^{d})$ given by
\[
\overline{\mathscr{L}}(\varphi)(t,\overline{x}) := \frac{1}{2}\sum_{i,j=1}^{d}\overline{a}_{i,j}(t,\overline{x})\frac{\partial^{2}\varphi}{\partial x_{i}\partial x_{j}}(t,\overline{x}) + \sum_{i=1}^{d} \overline{b}_{i}(t,\overline{x})\frac{\partial\varphi}{\partial x_{i}}(t,\overline{x}).
\]
Suppose that there is a set of probability measures $\overline{\mathbb{P}}=\{\overline{P}_{r,\overline{x}}\,|\,(r,\overline{x})\in [0,T]\times\mathbb{R}^{d}\}$ for which $(\xi,(\mathscr{S}_{t})_{t\in [0,T]},\overline{\mathbb{P}})$ becomes a canonical $\overline{\mathscr{L}}$-diffusion process in the standard sense. In other words, $(\xi,(\mathscr{S}_{t})_{t\in [0,T]},\overline{\mathbb{P}})$ is a diffusion process on $(S,\mathscr{S}_{T})$ with state space $\mathbb{R}^{d}$ for which the $\overline{\mathscr{L}}$-martingale property holds, that is, the process $[r,T)\times S\rightarrow\mathbb{R}$,
\[
(t,x)\mapsto \varphi(t,\xi_{t}(x)) - \int_{r}^{t}\bigg(\frac{\partial}{\partial s} + \overline{\mathscr{L}}\bigg)(\varphi)(s,\xi_{s}(x))\,ds
\]
is an $(\mathscr{S}_{t})_{t\in [r,T)}$-martingale under $\overline{P}_{r,\overline{x}}$ for each $(r,\overline{x})\in [0,T)\times\mathbb{R}^{d}$ and every $\varphi\in C_{b}^{1,2}([0,T)\times\mathbb{R}^{d})$. Then for each $(r,x)\in [0,T]\times S$ we let $P_{r,x}$ denote the unique probability measure on $(S,\mathscr{S}_{T})$ with $\xi^{r} = x^{r}$ $P_{r,x}$-a.s. such that the law of $\xi$ restricted to $[r,T]\times S$ under $\overline{P}_{r,x(r)}$ remains the same under $P_{r,x}$.

By setting $\mathbb{P}:=\{P_{r,x}\,|\, (r,x)\in [0,T]\times S\}$ and recalling that $\hat{\xi}_{t}=\xi^{t}$ for all $t\in [0,T]$, it follows that $(\hat{\xi},(\mathscr{S}_{t})_{t\in [0,T]},\mathbb{P})$ is a non-anticipative diffusion process with state space $S$. This procedure appears for instance in the construction of historical superprocesses (see~\cite{HistProcessesPerkins},~\cite{BranchingParticle}, and~\cite{PathProcesses}). Moreover, it follows from the functional It\^{o} formula that $(\xi,(\mathscr{S}_{t})_{t\in [0,T]},\mathbb{P})$ is an $\mathscr{L}$-diffusion process on $(S,\mathscr{S}_{T})$ as specified in the beginning of this section. 
\end{Example}

\subsection{Test functions for viscosity solutions}\label{Test functions for viscosity solutions}

For the introduction of several test function spaces below, we let $\mathscr{T}$ denote the set of all finite $(\mathscr{F}_{t})_{t\in [0,T]}$-stopping times $\tau$ for which there is a lower semicontinuous function $\phi:S\rightarrow [0,T]$ such that $\tau(\omega) = \phi(X(\omega))$ for all $\omega\in\Omega$. Put differently, a finite $(\mathscr{F}_{t})_{t\in [0,T]}$-stopping time is a member of $\mathscr{T}$ if and only if for each $t\in [0,T)$ there is an open set $O_{t}$ in $S$ such that
\[
\{\tau > t\} = \{\omega\in\Omega\,|\,X(\omega)\in O_{t}\}.
\]
If $\mathscr{X}$ is canonical, that is, $(\Omega,\mathscr{F}) = (S,\mathscr{S}_{T})$, $X=\xi$, and $\mathscr{F}_{t} = \mathscr{S}_{t}$ for all $t\in [0,T]$, then our definition of $\mathscr{T}$ reduces to that in~\cite{PPDEViscosity}. For example, $\tau:=\inf\{s\in [r,T]\,|\, u(s,X^{s})\in I\}\wedge t$ belongs to $\mathscr{T}$ for all $r,t\in [0,T]$ with $r\leq t$, each $u\in C([r,T]\times S)$, and every closed interval $I$ in $\mathbb{R}$.

For each non-anticipative function $u\in B_{b}([0,T]\times S)$ and every $(r,x)\in [0,T)\times S$, we define $\underline{\mathscr{S\!P}}\,u(r,x)$ to be the set of all $\varphi\in C_{b}^{1,2}([0,T)\times S)$ for which there is an $(\mathscr{F}_{t})_{t\in [r,T]}$-stopping time $\tau$ with $P_{r,x}(\tau > r) > 0$ such that
\[
(u-\varphi)(r,x) \geq E_{r,x}[(u-\varphi)(\widetilde{\tau}\wedge\tau,X^{\widetilde{\tau}\wedge\tau})]
\]
for every $\widetilde{\tau}\in\mathscr{T}$ with $\widetilde{\tau}\in [r,r+\delta)$ and some $\delta \in (0,T-r)$. In addition, we set $\overline{\mathscr{S\!P}}\, u(r,x):= -\underline{\mathscr{S\!P}}\,(-u)(r,x)$. Let $\underline{\mathscr{P}}\,u(r,x)$ be the set of all $\varphi\in C_{b}^{1,2}([0,T)\times S)$ such that $u-\varphi$ has a right-hand local maximum at $(r,x)$ in the sense that
\[
(u-\varphi)(r,x) \geq (u-\varphi)(s,y)
\]
for all $(s,y)\in [r,T)\times S$ with $d_{S}((s,y),(r,x)) < \delta$ and some $\delta\in (0,T-r)$. Moreover, we set $\overline{\mathscr{P}}\,u(r,x) := -\underline{\mathscr{P}}(-u)(r,x)$.

\begin{Definition}\label{Viscosity Solutions}
Let $u\in B_{b}([0,T]\times S)$ be $D$-valued and non-anticipative.
\begin{enumerate}[(i)]
\item We call $u$ a \emph{stochastic viscosity subsolution} (resp.~\emph{supersolution}) to \eqref{Parabolic Terminal Value Problem} if for every $(r,x)\in [0,T)\times S$ and each $\varphi\in\underline{\mathscr{S\!P}}\,u(r,x)$ (resp.~$\varphi\in\overline{\mathscr{S\!P}}\,u(r,x)$),
\[
(\partial_{r} + \mathscr{L})(\varphi)(r,x) \geq \text{(resp.~$\leq$) } f(r,x,u(r,x))\quad \text{and}\quad u(T,x) \leq \text{(resp.~$\geq $) } g(x).
\]
Moreover, $u$ is said to be a \emph{stochastic viscosity solution} to \eqref{Parabolic Terminal Value Problem} if it is both a stochastic viscosity sub- and supersolution.
\item We say that $u$ is a \emph{right-hand viscosity subsolution} (resp.~\emph{supersolution}) to \eqref{Parabolic Terminal Value Problem} if for all $(r,x)\in [0,T)\times S$ and each $\varphi\in\underline{\mathscr{P}}\,u(r,x)$ (resp.~$\varphi\in\overline{\mathscr{P}}\,u(r,x)$),
\[
(\partial_{r} + \mathscr{L})(\varphi)(r,x) \geq \text{(resp.~$\leq$) } f(r,x,u(r,x))\quad \text{and}\quad u(T,x) \leq \text{(resp.~$\geq $) } g(x).
\]
Furthermore, $u$ is a \emph{right-hand viscosity solution} to \eqref{Parabolic Terminal Value Problem} if it is a right-hand viscosity sub- and supersolution.
\end{enumerate}
\end{Definition}

By definition, every right-hand viscosity subsolution (resp.~supersolution) that belongs to $C_{b}^{1,2}([0,T)\times S)\cap C([0,T]\times S)$ is a classical subsolution (resp.~supersolution) in $C_{b}^{1,2}([0,T)\times S)$. As will be shown in Section~\ref{Relations between the notions of viscosity solutions}, every stochastic viscosity subsolution (resp.~supersolution) is a right-hand viscosity subsolution (resp.~supersolution). There we will also discuss the relations between the notion of a viscosity solution in~\cite{PPDEViscosity} and Definition~\ref{Viscosity Solutions}.

\subsection{The main results}\label{Main results Section}

We first state a general result on the relation between mild and viscosity solutions.

\begin{Theorem}\label{Main Result 1}
Let $\mathscr{X}$ be an $\mathscr{L}$-diffusion process and $a$, $b$, and $f$ be right-continuous. Then every bounded mild subsolution (resp.~supersolution) to \eqref{Parabolic Terminal Value Problem} that is right-continuous on $[0,T)\times S$ is a stochastic viscosity subsolution (resp.~supersolution).
\end{Theorem}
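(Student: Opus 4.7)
I would handle only the subsolution case; the supersolution case is symmetric in sign. The terminal inequality $u(T, \cdot) \leq g$ is already part of the mild-subsolution definition, so it suffices to prove, for every $(r, x) \in [0, T) \times S$ and every $\varphi \in \underline{\mathscr{S\!P}}\,u(r, x)$, that $(\partial_r + \mathscr{L})(\varphi)(r, x) \geq f(r, x, u(r, x))$. I would argue by contradiction: assume this fails at some $(r, x, \varphi)$, and let $\tau$ together with $\delta_0 > 0$ be the stopping time and window supplied by the definition of $\underline{\mathscr{S\!P}}\,u(r, x)$.

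The first step is a localization of the strict reverse inequality. Because $\partial_s \varphi$, $\partial_{x_i} \varphi$, $\partial_{x_i x_j} \varphi \in C_b([0, T) \times \widetilde S)$ are $d_S$-continuous, since $a, b, f$ are right-continuous, and since $u$ is right-continuous on $[0, T) \times S$, I can choose $\delta \in (0, \delta_0)$ and $\eta > 0$ so that
$$f(s, y, u(s, y)) - (\partial_s + \mathscr{L})(\varphi)(s, y) \geq \eta$$
for every $(s, y) \in [r, T) \times S$ with $d_S((s, y), (r, x)) < \delta$. The delicate point is to combine the right-continuity of $u$ at $(r, x)$ with the joint right-continuity of $f$ in all three of its arguments to uniformly control $f(s, y, u(s, y))$ on the neighborhood.

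For the process-side localization I would set
$$\widetilde\tau := \inf\{s \geq r : |X_s - x(r)| \geq \epsilon\} \wedge (r + h)$$
with $\epsilon + h < \delta$ and $h < \delta_0$. Under $P_{r, x}$ the identity $X^r = x^r$ a.s.\ yields $\|X^s - x^r\| = \sup_{u \in [r, s]} |X_u - x(r)|$, so $s \leq \widetilde\tau$ forces $d_S((s, X^s), (r, x)) \leq h + \epsilon < \delta$. Writing $\widetilde\tau = \phi(X)$ for the lower semicontinuous map $\phi(y) := \inf\{s \geq r : |y(s) - x(r)| \geq \epsilon\} \wedge (r+h)$ shows $\widetilde\tau \in \mathscr{T}$. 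Path continuity of $X$ together with $X_r = x(r)$ $P_{r, x}$-a.s.\ gives $\widetilde\tau > r$ a.s., and combined with $P_{r, x}(\tau > r) > 0$ this yields $E_{r, x}[\widetilde\tau \wedge \tau - r] > 0$.

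To conclude I would combine three relations evaluated at $\widetilde\tau \wedge \tau$: the defining inequality of $\underline{\mathscr{S\!P}}\,u(r, x)$, the $\mathscr{L}$-martingale identity for $\varphi$ along $X^\cdot$ obtained via optional sampling on $[r, T)$, and the mild-subsolution inequality. The $u$-terms cancel and I am left with
$$0 \geq E_{r, x}\Bigl[\int_r^{\widetilde\tau \wedge \tau} \bigl(f(s, X^s, u(s, X^s)) - (\partial_s + \mathscr{L})(\varphi)(s, X^s)\bigr)\, ds\Bigr] \geq \eta\, E_{r, x}[\widetilde\tau \wedge \tau - r] > 0,$$
a contradiction. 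The main obstacle, in my view, is the localization step: arranging the joint right-continuities of $u, f, a, b$ together with the $d_S$-continuity of the derivatives of $\varphi$ to produce a uniform strictly positive lower bound on a $d_S$-neighborhood of $(r, x)$, despite $u$ being merely right-continuous; once that bound is secured, the stopping-time construction, the membership $\widetilde\tau \in \mathscr{T}$, and the optional-sampling calculation are routine.
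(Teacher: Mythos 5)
Your argument is correct. All three ingredients you combine are exactly the ones the paper uses: the defining inequality of $\underline{\mathscr{S\!P}}\,u(r,x)$, the $\mathscr{L}$-martingale property of $\mathscr{X}$ together with optional sampling at the bounded stopping time $\widetilde{\tau}\wedge\tau$, and the mild-subsolution inequality; your localization step is sound, since the composition $(s,y)\mapsto f(s,y,u(s,y))$ is right-continuous at $(r,x)$ by combining the right-continuity of $u$ with the joint right-continuity of $f$, and $(s,y)\mapsto(\partial_{s}+\mathscr{L})(\varphi)(s,y)$ is right-continuous because $a$, $b$ are right-continuous and the derivatives of $\varphi$ are $d_{S}$-continuous; also your hitting time does lie in $\mathscr{T}$ and satisfies $\widetilde{\tau}>r$ $P_{r,x}$-a.s., so $E_{r,x}[\widetilde{\tau}\wedge\tau-r]>0$ follows from $P_{r,x}(\tau>r)>0$. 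Where you differ from the paper is the closing mechanism: the paper does not argue by contradiction, but works with the family of stopping times $t\wedge\widetilde{\tau}\wedge\tau$, divides the resulting inequality by $t-r$, and lets $t\downarrow r$ using a Fatou-type limit result (its Lemma~\ref{Viscosity Limit Lemma}), which only needs right-hand semicontinuity and local boundedness of the integrands; your contradiction route replaces that limit lemma by a uniform lower bound $\eta$ on the stochastic interval $[r,\widetilde{\tau}\wedge\tau]$, which is more elementary and avoids any limiting procedure. The trade-off is that the paper's normalized-limit machinery is reused and refined (Lemmas~\ref{Viscosity Limit Lemma 2}--\ref{Viscosity Limit Lemma 4}) for the affine case of Theorem~\ref{Main Result 2}, where $u$ need not be right-continuous and your uniform-$\eta$ localization would not be available, whereas your argument is tailored to, and fully sufficient for, Theorem~\ref{Main Result 1}.
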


After the authors finished a previous version of this paper, they became aware of \cite{CossoEtal2018}, where results were obtained that partially precede our Theorem \ref{Main Result 1}.  
In combination with Theorem~\ref{mild sol existence thm}, this immediately yields an existence result for viscosity solutions.

\begin{Corollary}
Let $\mathscr{X}$ be an $\mathscr{L}$-diffusion process for which $\hat{\mathscr{X}}$ is (right-hand) Feller, and let $a$, $b$, and $f$ be right-continuous. Assume that $f$ satisfies the hypotheses of Theorem~\ref{mild sol existence thm} and $g\in C_{b}(S)$. Then there exists a bounded (right-)continuous stochastic viscosity solution to \eqref{Parabolic Terminal Value Problem}.
\end{Corollary}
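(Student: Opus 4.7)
The plan is to combine Theorem~\ref{mild sol existence thm} and Theorem~\ref{Main Result 1} directly. First I would invoke Theorem~\ref{mild sol existence thm}: since $f$ is assumed to satisfy the three standing hypotheses (i)--(iii) of that theorem, there exists a unique bounded mild solution $u$ to \eqref{Parabolic Terminal Value Problem}. The additional regularity conclusion of Theorem~\ref{mild sol existence thm} then applies: because $\hat{\mathscr{X}}$ is (right-hand) Feller, $g\in C_{b}(S)$, and $f$ is right-continuous (which trivially implies a.s.~right-continuity), this mild solution $u$ is (right-)continuous on $[0,T]\times S$.

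Next I would feed $u$ into Theorem~\ref{Main Result 1}. All the hypotheses of that theorem are in place: $\mathscr{X}$ is an $\mathscr{L}$-diffusion process; the coefficients $a$, $b$ and the nonlinearity $f$ are right-continuous; and the bounded mild solution $u$ produced in the first step is right-continuous on $[0,T)\times S$ (the desired right-continuity at interior points follows from the stronger (right-)continuity on $[0,T]\times S$ obtained above). Applying Theorem~\ref{Main Result 1} to $u$ as a mild subsolution and separately as a mild supersolution yields that $u$ is simultaneously a stochastic viscosity subsolution and a stochastic viscosity supersolution, i.e., a stochastic viscosity solution to \eqref{Parabolic Terminal Value Problem}.

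There is essentially no real obstacle, since the proof is purely a concatenation of two previously established results; the only points worth verifying are bookkeeping ones. Specifically, one should confirm that ``right-continuous'' in the sense defined in Section~\ref{Measurability section} implies ``a.s.~right-continuous'' as required in Theorem~\ref{mild sol existence thm} (immediate, since the Lebesgue-null exceptional set can be taken empty), and that the regularity ``(right-)continuous on $[0,T]\times S$'' delivered by Theorem~\ref{mild sol existence thm} entails right-continuity on $[0,T)\times S$ in the sense demanded by Theorem~\ref{Main Result 1}. Both verifications are routine, so the corollary follows by combining the two theorems with no further work.
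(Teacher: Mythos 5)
Your proposal is correct and is exactly the paper's argument: the paper treats the corollary as an immediate combination of Theorem~\ref{mild sol existence thm} (existence of a unique bounded, (right-)continuous mild solution under the Feller, $g\in C_b(S)$, and right-continuity hypotheses) with Theorem~\ref{Main Result 1}. The bookkeeping checks you mention (right-continuity implying a.s.~right-continuity, and restriction of the regularity to $[0,T)\times S$) are indeed trivial and do not require further argument.
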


\begin{Remark}
In \cite{KalininDiff} an $\mathscr{L}$-diffusion process $\mathscr{X}$ for which $\hat{\mathscr{X}}$ is Feller is derived under the following condition: there are three square-integrable $\alpha,\beta,\lambda:[0,T)\rightarrow\mathbb{R}_{+}$ such that
\begin{align*}
|b(\cdot,x)|\vee|\sigma(\cdot,x)| \leq \alpha + \beta\|x\|\quad\text{and}\quad |b(\cdot,x) - b(\cdot,y)|\vee |\sigma(\cdot,x) - \sigma(\cdot,y)| \leq \lambda\|x-y\|
\end{align*}
for each $x,y\in S$ a.s.~on $[0,T)$. Here, $\sigma:[0,T)\times S\rightarrow\mathbb{R}^{d\times d}$ is a non-anticipative Borel measurable map that satisfies $a = \sigma\sigma^{t}$, and $|\cdot|$ also stands for the Frobenius norm on $\mathbb{R}^{d\times d}$.
\end{Remark}

In the case that the underlying PPDE in \eqref{Parabolic Terminal Value Problem} is affine, we obtain stronger results than in the general case but at the cost of a more technical proof. Here, we use the right-hand upper and lower semicontinuous envelopes of a right-hand locally bounded function $u\in B([0,T]\times S)$ that are respectively given by
\[
u^{\leftarrow}(r,x) = \limsup_{(s,y)\rightarrow (r,x),\, s\geq r} u(s,y)\quad\text{and}\quad u_{\leftarrow}(r,x) = \liminf_{(s,y)\rightarrow (r,x),\, s\geq r} u(s,y)
\]
for all $(r,x)\in [0,T]\times S$.

\begin{Theorem}\label{Main Result 2}
Assume that $\mathscr{X}$ is an $\mathscr{L}$-diffusion process and $a$ and $b$ are right-continuous. Let $f(t,x,z) = \alpha(t,x) + \beta(t,x)z$ for all $(t,x,z)\in [0,T)\times S\times D$ and some right-continuous $\alpha,\beta:[0,T)\times S\rightarrow\mathbb{R}$ for which $\alpha(\cdot,x)$ and $\beta(\cdot,x)$ are locally integrable for all $x\in S$. Then the following two assertions hold for each bounded mild solution $u$ to \eqref{Parabolic Terminal Value Problem}:
\begin{enumerate}[(i)]
\item $u$ is a stochastic viscosity solution regardless of whether it is right-continuous on $[0,T)\times S$.
\item Let $D$ be closed and suppose that $\lim_{n\uparrow\infty} P_{r_{n},x_{n}}(\|X^{t_{n}} - x_{n}^{r_{n}}\|\geq\gamma) = 0$ for all $\gamma > 0$, every $(r,x)\in [0,T)\times S$, and each sequence $(r_{n},x_{n},t_{n})_{n\in\mathbb{N}}$ in $[r,T)\times S\times [r,T)$ with $t_{n}\geq r_{n}$ for all $n\in\mathbb{N}$ and $\lim_{n\uparrow\infty} (r_{n},x_{n},t_{n}) = (r,x,r)$. Then $u^{\leftarrow}$ (resp.~$u_{\leftarrow}$) is a right-hand viscosity subsolution (resp.~supersolution).
\end{enumerate}
\end{Theorem}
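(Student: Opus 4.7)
The plan is to prove the two claims using the affine structure in different ways. For claim (i), I would exploit an integrating factor that absorbs the coupling $\beta u$: setting $M_t := \exp\bigl(-\int_r^t \beta(s,X^s)\,ds\bigr)$ and $G(s,y):=(\partial_s + \mathscr{L})\varphi(s,y) - \alpha(s,y) - \beta(s,y)\varphi(s,y)$ for $\varphi \in C_b^{1,2}([0,T)\times S)$, a computation using the mild-solution identity for $u$, the $\mathscr{L}$-martingale property for $\varphi$, and the product rule against the absolutely continuous process $M$ shows that
\[
W_t := M_t(u-\varphi)(t,X^t) + \int_r^t M_s\,G(s,X^s)\,ds
\]
is a $P_{r,x}$-martingale with $W_r = (u-\varphi)(r,x)$. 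Crucially, $G$ is right-continuous at $(r,x)$ and carries no $u$-dependence.

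Fix $\varphi \in \underline{\mathscr{S\!P}}\,u(r,x)$ with stopping time $\tau$ ($P_{r,x}(\tau>r)>0$), and set $\sigma_h := (r+h)\wedge\tau$ for $h>0$. Subtracting the mild identity for $u$ and the $\mathscr{L}$-martingale identity for $\varphi$, and then invoking the subjet inequality $E_{r,x}[(u-\varphi)(\sigma_h,X^{\sigma_h})] \le (u-\varphi)(r,x)$ after writing $\beta u = \beta\varphi + \beta(u-\varphi)$, yields
\[
E_{r,x}\Bigl[\int_r^{\sigma_h} G(s,X^s)\,ds\Bigr] \;\ge\; E_{r,x}\Bigl[\int_r^{\sigma_h} \beta(s,X^s)(u-\varphi)(s,X^s)\,ds\Bigr].
\]
Dividing by $E_{r,x}[\sigma_h - r]>0$ and letting $h\downarrow 0$, the left side tends to $G(r,x)$ by right-continuity of $G$ and path-continuity of $X$ at $r$. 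The delicate step is the matching limit on the right, which I would extract by writing the ratio via Fubini as
\[
\frac{\int_r^{r+h} E_{r,x}\bigl[\mathbbm{1}_{\{s<\tau\}}\beta(s,X^s)(u-\varphi)(s,X^s)\bigr]\,ds}{\int_r^{r+h} P_{r,x}(\tau>s)\,ds}
\]
and computing the $s\downarrow r$ limits of numerator and denominator integrands using three ingredients: (a) the mild identity gives $E_{r,x}[u(s,X^s)]\to u(r,x)$, which combined with right-continuity of $\beta$ and continuity of $\varphi$ yields $E_{r,x}[\beta(s,X^s)(u-\varphi)(s,X^s)] \to \beta(r,x)(u-\varphi)(r,x)$; (b) condition (iii) of the diffusion applied at the deterministic time $\tau=r$, combined with $\hat X_r = x^r$ $P_{r,x}$-a.s.\ and $P_{r,x}=P_{r,x^r}$, implies that $\hat X_s$ is independent of $\mathscr{F}_r$ under $P_{r,x}$ for $s\ge r$, so $E_{r,x}[\mathbbm{1}_A\Phi(\hat X_s)]=P_{r,x}(A)\,E_{r,x}[\Phi(\hat X_s)]$ for $A\in\mathscr{F}_r$; (c) $\{\tau>r\}\in\mathscr{F}_r$ and $\mathbbm{1}_{\{s<\tau\}}\uparrow\mathbbm{1}_{\{\tau>r\}}$ $P_{r,x}$-a.s.\ as $s\downarrow r$, with $P_{r,x}(r<\tau\le s)\to 0$. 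Chaining (a)--(c), the numerator integrand tends to $P_{r,x}(\tau>r)\beta(r,x)(u-\varphi)(r,x)$ and the denominator to $P_{r,x}(\tau>r)>0$, so the ratio converges to $\beta(r,x)(u-\varphi)(r,x)$. Rearranging $G(r,x)\ge\beta(r,x)(u-\varphi)(r,x)$ is exactly $(\partial_r+\mathscr{L})\varphi(r,x) \ge f(r,x,u(r,x))$. The supersolution direction follows by applying the same argument to $-u$, a bounded mild solution of the affine problem with $\alpha$ replaced by $-\alpha$; the terminal condition at $T$ is built into the mild definition.

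For claim (ii), given $\varphi\in\underline{\mathscr{P}}\,u^{\leftarrow}(r,x)$, I would pick $(r_n,x_n)\to(r,x)$ with $r_n\ge r$ and $u(r_n,x_n)\to u^{\leftarrow}(r,x)$, which exists by definition of the envelope. The right-hand local-maximum property of $u^{\leftarrow}-\varphi$ at $(r,x)$ and the bound $u\le u^{\leftarrow}$ give $u(s,y)-\varphi(s,y)\le u^{\leftarrow}(r,x)-\varphi(r,x)$ on a $d_S$-neighbourhood of $(r,x)$ restricted to $\{s\ge r\}$. The tightness hypothesis ensures that the paths $(t,X^t)$ under $P_{r_n,x_n}$ reach this neighbourhood with probability tending to $1$, so after absorbing an $o(1)$ error into a perturbation $\varphi_n := \varphi + \varepsilon_n$ (with $\varepsilon_n\downarrow 0$), $\varphi_n$ lies in $\underline{\mathscr{S\!P}}\,u(r_n,x_n)$. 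Claim (i) applied at $(r_n,x_n)$ yields $(\partial_{r_n}+\mathscr{L})\varphi_n(r_n,x_n) \ge f(r_n,x_n,u(r_n,x_n))$, and I would pass to the limit via right-continuity of $a$, $b$, $\alpha$, $\beta$, continuity of the derivatives of $\varphi$, and closedness of $D$ (so that $u^{\leftarrow}(r,x)\in D$). The supersolution claim for $u_\leftarrow$ is analogous. The main obstacle, both logically and technically, is the right-hand side convergence in claim (i) without right-continuity of $u$; the strong-Markov-induced independence in (b), together with the $\sigma$-field dichotomy in (c), is what decouples $u(s,X^s)$ from the stopping time $\tau$ in the limit.
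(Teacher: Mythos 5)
Your argument for part (i) is a genuinely different route from the paper's (which replaces $u(s,X^{s})$ inside the time integral by the pathwise right-continuous representative $\phi(s,X)=g(X)-\int_{s}^{T}f(t,X^{t},u(t,X^{t}))\,dt$ via Fubini and the Markov property, and then applies dominated convergence), and your decoupling ingredients (b)--(c) are correct: $\hat{X}_{s}$ is indeed independent of $\mathscr{F}_{r}$ under $P_{r,x}$ and $\{\tau>r\}\in\mathscr{F}_{r}$. The gap is the complete absence of localization. The coefficients $a,b,\alpha,\beta$ are only right-continuous, not bounded, and $s\mapsto\beta(s,X^{s}(\omega))$ need not even be locally integrable; with $\sigma_{h}=(r+h)\wedge\tau$ alone, neither the splitting $\beta u=\beta\varphi+\beta(u-\varphi)$ inside expectations, nor the integrability of $\int_{r}^{\sigma_{h}}(\partial_{s}+\mathscr{L})(\varphi)(s,X^{s})\,ds$, nor your step (a) is justified: convergence of $E_{r,x}[u(s,X^{s})]$ plus a.s.\ convergence of $\beta(s,X^{s})$ does not give $E_{r,x}[\beta(s,X^{s})(u-\varphi)(s,X^{s})]\to\beta(r,x)(u-\varphi)(r,x)$ without domination of $\beta(s,X^{s})$, which is exactly what the paper secures by stopping at the exit time of a small $d_{S}$-ball on which $|\alpha|\vee|\beta|\vee|(\partial_{s}+\mathscr{L})(\varphi)|\leq c$. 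Note also that this fix interferes with your independence step, since the exit-time indicator is not $\mathscr{F}_{r}$-measurable and so cannot be factored out; it can be repaired (e.g.\ using that $\{\|X^{s}-x^{r}\|<\gamma/2\}$ is a function of $\hat{X}_{s}$ and that $u$ is bounded), but none of this is in your write-up.

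Part (ii) fails as proposed. Adding a constant to the test function does nothing: the defining quantity $(u-\varphi_{n})(r_{n},x_{n})-E_{r_{n},x_{n}}[(u-\varphi_{n})(\widetilde{\tau}\wedge\tau,X^{\widetilde{\tau}\wedge\tau})]$ is invariant under $\varphi_{n}=\varphi+\varepsilon_{n}$, so the error $\eta_{n}:=(u^{\leftarrow}-\varphi)(r,x)-(u-\varphi)(r_{n},x_{n})\geq0$ is not absorbed, and no other perturbation can absorb it either, because admissible test times $\widetilde{\tau}\wedge\tau$ may be arbitrarily close to $r_{n}$ while $u(s,X^{s})$ can still be close to $u^{\leftarrow}(r,x)$; hence in general $\varphi_{n}\notin\underline{\mathscr{S\!P}}\,u(r_{n},x_{n})$ and the reduction of (ii) to (i) collapses. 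The paper does not reduce (ii) to (i): it keeps $\eta_{n}$, chooses $t_{n}$ with $\eta_{n}/(t_{n}-r_{n})\to0$ (e.g.\ $t_{n}=r_{n}+\tfrac12\min\{\sqrt{|\eta_{n}|},T-r_{n}\}$), and proves two sequential limit lemmas along $(r_{n},x_{n},t_{n},\tau_{n})$; the hypothesis $P_{r_{n},x_{n}}(\|X^{t_{n}}-x_{n}^{r_{n}}\|\geq\gamma)\to0$ enters precisely there (to control $P_{r_{n},x_{n}}(\tau_{n}\leq t_{n})$ and the exit events), not to establish membership in a test-function class, and the lemma treating $\frac{1}{t_{n}-r_{n}}E_{r_{n},x_{n}}[\int_{r_{n}}^{t_{n}\wedge\tau_{n}}\beta(s,X^{s})u(s,X^{s})\,ds]$ requires a further argument (Fubini and the strong Markov property applied at $\tau_{n}\vee s$) because $u$ is not right-continuous. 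Your sketch contains no substitute for this machinery, so assertion (ii) remains unproved.
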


\begin{Remark}\label{Superprocess example}
In the affine case, there is the following Feynman--Kac formula for the bounded mild solution $u$. To state it, suppose in the context of Theorem~\ref{Main Result 2}  that  $|\alpha(\cdot,x)|\vee|\beta(\cdot,x)| \leq \gamma$ for all $x\in S$ a.s.~on $[0,T)$ and some integrable function $\gamma:[0,T)\rightarrow\mathbb{R}_{+}$. Then
\[
u(r,x) = E_{r,x}\bigg[e^{-\int_{r}^{T}\beta(s,X^{s})\,ds}g(X^{T})\bigg] - E_{r,x}\bigg[\int_{r}^{T}e^{-\int_{r}^{t}\beta(s,X^{s})\,ds}\alpha(t,X^{t})\,dt\bigg]
\]
for all $(r,x)\in [0,T]\times S$. For $\alpha=0$ and $\beta\geq 0$ this result follows from \cite[Theorem 4.1.1]{DynkinBranchingBook}; the general case is proved in \cite{Kalinin}[Corollary 2.13].
\end{Remark}

\subsection{Relations between the notions of viscosity solutions}\label{Relations between the notions of viscosity solutions}

To discuss the announced relations between the notion of a viscosity solution in~\cite{PPDEViscosity} and our Definition~\ref{Viscosity Solutions}, we consider the following. For each $(r,x)\in [0,T)\times S$ and every $(\mathscr{F}_{t})_{t\in [r,T]}$-progressively measurable process $\beta:[r,T]\times\Omega\rightarrow\mathbb{R}^{d}$ fulfilling
\begin{equation}\label{Local Martingale Condition}
\int_{r}^{T}\bigg|\sum_{i=1}^{d}\beta_{s}^{(i)} b_{i}(s,X^{s})\bigg|\,ds + \int_{r}^{T} \sum_{i,j=1}^{d} a_{i,j}(s,X^{s})\beta_{s}^{(i)}\beta_{s}^{(j)}\,ds < \infty,
\end{equation}
we pick an $(\mathscr{F}_{t})_{t\in [r,T]}$-progressively measurable process $M^{r,\beta}:[r,T]\times\Omega\rightarrow (0,\infty)$ with right-continuous and $P_{r,x}$-a.s.~continuous paths so that
\[
M_{t}^{r,\beta} = \exp\bigg(\int_{r}^{t}\beta_{s}\,dX_{s} - \int_{r}^{t}\sum_{i=1}^{d}\beta_{s}^{(i)}b_{i}(s,X^{s})\,ds -  \frac{1}{2}\int_{r}^{t}\sum_{i,j=1}^{d} a_{i,j}(s,X^{s})\beta_{s}^{(i)}\beta_{s}^{(j)}\,ds\bigg)
\]
for all $t\in [r,T]$ $P_{r,x}$-a.s. Here, the stochastic integral is constructed using Lemma 4.3.3 and Exercise 4.6.8 of~\cite{DiffusionProcesses}, which ensures right-continuity of all paths such that the set of all continuous paths has $P_{r,x}$-measure one; these considerations are needed, as we do not impose the usual conditions. Clearly, $M^{r,\beta}$ is an $(\mathscr{F}_{t})_{t\in [r,T]}$-supermartingale under $P_{r,x}$ that becomes an $(\mathscr{F}_{t})_{t\in [r,T]}$-martingale if and only if $E_{r,x}\big[M_{T}^{r,\beta}\big] = 1$. For $L \geq 0$ we let $\mathscr{U}_{r}^{L}$ be the set of all $(\mathscr{F}_{t})_{t\in [r,T]}$-progressively measurable processes $\beta:[r,T]\times\Omega\rightarrow\mathbb{R}^{d}$ for which each coordinate function is bounded by $L$ and $E_{r,x}\big[M_{T}^{r,\beta}\big] = 1$.

For each non-anticipative function $u\in B_{b}([0,T]\times S)$ let $\underline{\mathscr{A}}^{L} u(r,x)$ be the set of all $\varphi\in C_{b}^{1,2}([0,T)\times S)$ for which there is $\tau\in\mathscr{T}$ with $\tau > r$ $P_{r,x}$-a.s. such that
\[
(u-\varphi)(r,x) \geq E_{r,x}[M_{T}^{r,\beta}(u-\varphi)(\widetilde{\tau}\wedge\tau,X^{\widetilde{\tau}\wedge\tau})] 
\]
for all $\widetilde{\tau}\in\mathscr{T}$ with $\widetilde{\tau}\in [r,r+\delta)$, every $\beta\in\mathscr{U}_{r}^{L}$, and some $\delta\in (0,T-r)$. Correspondingly, we set $\overline{\mathscr{A}}^{L}u(r,x) := -\underline{\mathscr{A}}^{L}(-u)(r,x)$. This translates the concepts and spaces of test functions used for the definition of a viscosity solution in~\cite{PPDEViscosity} to our current setting. Hence, $u$ is a viscosity subsolution (resp.~supersolution) to \eqref{Parabolic Terminal Value Problem} in the sense\footnote{Note that only continuous functions $u$ are considered in~\cite{PPDEViscosity}.} of~\cite{PPDEViscosity} if there is $L\geq 0$ such that
\[
(\partial_{r} + \mathscr{L})(\varphi)(r,x)\geq \text{(resp. $\leq$)}\,\, f(r,x,u(r,x))
\]
for all $(r,x)\in [0,T)\times S$ and every $\varphi\in\underline{\mathscr{A}}^{L}u(r,x)$ (resp.~$\varphi\in\overline{\mathscr{A}}^{L}u(r,x)$).

To give another reasonable space of test functions, let us in this context for each $(r,x)\in [0,T)\times S$ define $\mathscr{U}_{r,x}$ to be the set of all $(\mathscr{F}_{t})_{t\in [r,T]}$-progressively measurable processes $\beta:[r,T]\times\Omega\rightarrow\mathbb{R}^{d}$ satisfying \eqref{Local Martingale Condition} and $E_{r,x}\big[M_{T}^{r,\beta}\big] = 1$. By $\underline{\mathscr{A}}\,u(r,x)$ we denote the set of all $\varphi\in C_{b}^{1,2}([0,T)\times S)$ for which there is $\tau\in\mathscr{T}$ with $\tau > r$ $P_{r,x}$-a.s. such that
\[
(u-\varphi)(r,x) \geq  E_{r,x}[M_{T}^{r,\beta}(u-\varphi)(\widetilde{\tau}\wedge\tau,X^{\widetilde{\tau}\wedge\tau})]
\] 
for all $\widetilde{\tau}\in\mathscr{T}$ with $\widetilde{\tau}\in [r,r+\delta)$, every $\beta\in\mathscr{U}_{r,x}$, and some $\delta\in (0,T-r)$. In addition, let $\overline{\mathscr{A}}\,u(r,x) := -\underline{\mathscr{A}}\,(-u)(r,x)$.

\begin{Lemma}\label{Test Function Lemma}
Let $u\in B_{b}([0,T]\times S)$ be non-anticipative, $(r,x)\in [0,T)\times S$, and $L\geq 0$, then $\underline{\mathscr{P}}\,u(r,x)\subset\underline{\mathscr{A}}\,u(r,x)\subset
\underline{\mathscr{A}}^{L}u(r,x)\subset\underline{\mathscr{S\!P}}\,u(r,x)$. In particular, each stochastic viscosity solution to \eqref{Parabolic Terminal Value Problem} is a viscosity solution in the sense of~\cite{PPDEViscosity} and every such solution is a right-hand viscosity solution.
\end{Lemma}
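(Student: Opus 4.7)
The plan is to establish the three set inclusions in order and then read off the two solution-concept implications by contraposition. The second and third inclusions are quick, so the crux is the first one, $\underline{\mathscr{P}}\,u(r,x) \subset \underline{\mathscr{A}}\,u(r,x)$, which requires an explicit construction of a stopping time in $\mathscr{T}$.

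For the main step, fix $\varphi \in \underline{\mathscr{P}}\,u(r,x)$ and pick $\delta \in (0,T-r)$ such that $(u-\varphi)(r,x) \geq (u-\varphi)(s,y)$ whenever $s \geq r$ and $d_{S}((s,y),(r,x)) < \delta$. I would introduce the stopping time
\[
\tau := (r+\delta/4) \wedge \inf\Big\{s \in [r,T] : \sup_{t \in [r,s]} |X(t)-x(r)| \geq \delta/4\Big\},
\]
which has the form $\tau = \phi(X)$ for the obvious path functional $\phi : S \to [0,T]$. For $t \in [r, r+\delta/4)$ the superlevel set $\{\phi > t\}$ equals $\{y \in S : \sup_{u \in [r,t]} |y(u) - x(r)| < \delta/4\}$, which is open in $(S, \|\cdot\|)$ and hence in $S$ (the uniform and Skorohod topologies agree on $S$); the remaining superlevel sets are either empty or all of $S$. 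So $\phi$ is lower semicontinuous and $\tau \in \mathscr{T}$. Path continuity combined with $X(r) = x(r)$ $P_{r,x}$-a.s.\ yields $\tau > r$ $P_{r,x}$-a.s. For any $\tilde\tau \in \mathscr{T}$ with $\tilde\tau \in [r, r+\delta/4)$ one has both $\tilde\tau \wedge \tau - r \leq \delta/4$ and $\rho(X^{\tilde\tau \wedge \tau}, x^{r}) \leq \|X^{\tilde\tau \wedge \tau} - x^{r}\| = \sup_{t \in [r, \tilde\tau \wedge \tau]} |X(t) - x(r)| \leq \delta/4$ $P_{r,x}$-a.s., whence $d_{S}((\tilde\tau \wedge \tau, X^{\tilde\tau \wedge \tau}), (r,x)) \leq \delta/2 < \delta$. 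The local-maximum property then gives the pointwise bound $(u-\varphi)(\tilde\tau \wedge \tau, X^{\tilde\tau \wedge \tau}) \leq (u-\varphi)(r,x)$ $P_{r,x}$-a.s., and multiplying by the nonnegative random variable $M_{T}^{r,\beta}$ and using $E_{r,x}[M_{T}^{r,\beta}] = 1$ produces the required inequality for every $\beta \in \mathscr{U}_{r,x}$.

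The inclusion $\underline{\mathscr{A}}\,u(r,x) \subset \underline{\mathscr{A}}^{L} u(r,x)$ is then immediate from $\mathscr{U}_{r}^{L} \subset \mathscr{U}_{r,x}$, because a coordinatewise bound by $L$ combined with the standing local integrability of $a(\cdot,y)$ and $b(\cdot,y)$ forces \eqref{Local Martingale Condition} to hold $P_{r,x}$-a.s. The inclusion $\underline{\mathscr{A}}^{L} u(r,x) \subset \underline{\mathscr{S\!P}}\,u(r,x)$ is obtained by specializing to $\beta \equiv 0 \in \mathscr{U}_{r}^{L}$, which makes $M^{r,0} \equiv 1$ and reduces the $\underline{\mathscr{A}}^{L}$-inequality to the defining inequality of $\underline{\mathscr{S\!P}}$ (the condition $P_{r,x}(\tau > r) > 0$ is immediate from $\tau > r$ a.s.). The two ``in particular'' statements then follow by contraposition from the chain $\underline{\mathscr{P}} \subset \underline{\mathscr{A}}^{L} \subset \underline{\mathscr{S\!P}}$: a stochastic viscosity subsolution satisfies the viscosity inequality on the largest space $\underline{\mathscr{S\!P}}$, hence on $\underline{\mathscr{A}}^{L}$ for every $L \geq 0$; and a viscosity subsolution in the sense of~\cite{PPDEViscosity} satisfies it on some $\underline{\mathscr{A}}^{L}$, hence on the smaller space $\underline{\mathscr{P}}$.

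The main obstacle is clearly the construction in the first step: one has to exhibit a single $\tau$ that simultaneously lies in the restrictive class $\mathscr{T}$, is $P_{r,x}$-a.s.\ strictly greater than $r$, and forces every stopped configuration $(\tilde\tau \wedge \tau, X^{\tilde\tau \wedge \tau})$ into a prescribed $d_{S}$-ball around $(r,x)$. The explicit hitting-time construction above works because the sup-norm distance $\sup_{t \in [r,s]} |X(t) - x(r)|$ is continuous in the path $X \in S$ (so that $\phi$ is automatically lower semicontinuous) and controls the Skorohod distance $\rho$ through the inequality $\rho \leq \|\cdot\|$.
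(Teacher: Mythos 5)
Your proof is correct and follows essentially the same route as the paper: the inclusions $\underline{\mathscr{A}}\,u(r,x)\subset\underline{\mathscr{A}}^{L}u(r,x)\subset\underline{\mathscr{S\!P}}\,u(r,x)$ via $\mathscr{U}_{r}^{L}\subset\mathscr{U}_{r,x}$ and the choice $\beta=0$ with $M_{T}^{r,0}=1$, and the key inclusion $\underline{\mathscr{P}}\,u(r,x)\subset\underline{\mathscr{A}}\,u(r,x)$ via a capped hitting time of a small uniform ball around $x^{r}$, which is exactly the paper's stopping time $\inf\{t\in[r,T]\,|\,\|X^{t}-x^{r}\|\geq\delta/3\}\wedge(r+\delta/2)$ up to constants. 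Your explicit verification of lower semicontinuity of the associated path functional is a welcome addition, but the argument is otherwise the same.
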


Of course, the second assertion of above proposition remains true if solution is either replaced by sub- or supersolution.

\section{Derivation of the results}\label{Derivation of the results}

\subsection{Mild solutions, stochastic control, and test functions}\label{Mild solutions and test functions}

First, we prove the characterization of mild solutions given by Lemma~\ref{Characterization of Mild Solutions}. To this end, note that if $u$ is a mild solution to \eqref{Parabolic Terminal Value Problem}, then $\phi:[0,T]\times S\rightarrow\mathbb{R}$ defined via
\begin{equation}\label{Mild Solution Representation}
\phi(s,y):=g(y) - \int_{s}^{T}f(t,y^{t},u(t,y^{t}))\,dt
\end{equation}
is $\mathscr{B}([0,T])\otimes\mathscr{S}_{T}$-measurable and the function $[0,T]\rightarrow\mathbb{R}$, $s\mapsto\phi(s,y)$ is continuous for each $y\in S$. Moreover, $E_{r,x}[|\phi(s,X)|] < \infty$ and $E_{r,x}[u(s,X^{s})] = E_{r,x}[\phi(s,X)]$ for all $r,s\in [0,T]$ with $r\leq s$ and each $x\in S$.

\begin{proof}[Proof of Lemma~\ref{Characterization of Mild Solutions}]
The necessity of the stated conditions follows directly from the definition of a mild solution by taking $\tau = T$. To show the sufficiency, note that $u$ is automatically non-anticipative, as $P_{r,x} = P_{r,x^{r}}$ for all $(r,x)\in [0,T]\times S$. Let $(r,x)\in [0,T]\times S$ and $\tau$ be a finite $(\mathscr{F}_{t})_{t\in [r,T]}$-stopping time. We define $\phi:[0,T]\times S\rightarrow\mathbb{R}$ via \eqref{Mild Solution Representation}, then the strong Markov property of $\hat{\mathscr{X}}$ entails that $E_{r,x}[|u(\tau,X^{\tau})|] < \infty$ and $E_{r,x}[u(\tau,X^{\tau})] = E_{r,x}[\phi(\tau,X)]$. Thus,
\begin{align*}
E_{r,x}[u(\tau,X^{\tau})] &= E_{r,x}[g(X^{T})] - E_{r,x}\bigg[\int_{\tau}^{T}f(s,X^{s},u(s,X^{s}))\,ds\bigg]\\
&= u(r,x) + E_{r,x}\bigg[\int_{r}^{\tau}f(s,X^{s},u(s,X^{s}))\,ds\bigg].
\end{align*}
\end{proof}

\begin{proof}[Proof of Proposition~\ref{control prop}]
Note that, since every mild solution to \eqref{Parabolic Terminal Value Problem} is automatically bounded, existence and uniqueness of mild solutions are a direct consequence of Theorem~\ref{mild sol existence thm}. We follow the proof of \cite[Theorem 2.8]{SchiedFuel}. In a first step, we can assume without loss of generality that $\nu_0>0$ and confine our attention to non-increasing and non-negative control processes $\nu$; this can be seen as in \cite[Lemma 4.1]{SchiedFuel}. Next, we claim that
\begin{equation}\label{u conv g eq}
\lim_{t\uparrow T} u(t,X^{t}) = g(X^{T})\quad\text{$P_{0,x}$-a.s.}
\end{equation}
Indeed, the Markov property of $\hat{\mathscr{X}}$, martingale convergence, and the continuity of the  paths of $X$ yield that
\[
\lim_{t\uparrow T}E_{t,X^t}[g(X^{T})]= \lim_{t\uparrow T}E_{0,x}[g(X^{T})|\mathscr{F}_{t}]=E_{0,x}[g(X^{T})|\sigma(\cup_{t<T}\mathscr{F}_{t})]=g(X^{T})
\]
$P_{0,x}$-a.s. Moreover, since $u$ and the function $[0,T)\times S\rightarrow\mathbb{R}$, $(r,x)\mapsto f(r,x,u(r,x))$ are bounded, we get that $\lim_{t\uparrow T}\int_{t}^{T}f(s,X^{s},u(s,X^{s}))\,ds=0$. So, \eqref{u conv g eq} follows.

Now take a control process of the form $\nu(t)=\nu_{0}+\int_{0}^{t}\dot\nu(s)\,ds$ for a progressive and integrable process $(\dot\nu(t))_{t\in [0,T]}$ and define for $t<T$,
\begin{equation}\label{Ct def eq}
C^{\nu}_{t}:=\int_{0}^{t}\Big(|\dot\nu(s)|^{p}\eta(s,X^{s})+|\nu(s)|^{p}\alpha(s,X^{s})\Big)\,ds+|\nu(t)|^{p}u(t,X^{t}).
\end{equation}
Then \eqref{u conv g eq} yields that $P_{0,x}$-a.s.,
\[
\lim_{t\uparrow T} C^{\nu}_{t} = C^{\nu}_{T}:=\int_{0}^{T}\Big(|\dot\nu(s)|^{p}\eta(s,X^{s})+|\nu(s)|^{p}\alpha(s,X^{s})\Big)\,ds+|\nu(T)|^{p}g(X^{T}).
\]
Next, the Markov property of $\hat{\mathscr{X}}$ implies that $P_{0,x}$-a.s.,
\begin{align*}
M_{t}&:=u(t,X^{t})+\int_{0}^{t}\Big(\alpha(s,X^{s})-\frac{u(s,X^{s})^{q}}{(q-1)\eta(s,X^{s})^{q-1}}\Big)\,ds\\
&=E_{0,x}\bigg[g(X^{T})+\int_{0}^{T}\Big(\alpha(s,X^{s})-\frac{u(s,X^{s})^{q}}{(q-1)\eta(s,X^{s})^{q-1}}\Big)\,ds\,\bigg|\,\mathscr{F}_{t}\bigg].
\end{align*}
Hence, $M$ is a right-continuous martingale. Applying It\^o's formula to \eqref{Ct def eq} and arguing as in the proof of \cite[Proposition 4.4]{SchiedFuel}, we now arrive at
\[
dC^{\nu}_{t}=\eta(t,X^{t})\Phi_{p}\bigg(|\dot\nu(t)|,\nu(t)\Big(\frac{u(t,X^{t})}{\eta(t,X^{t})}\Big)^{q-1}\bigg)\,dt+\nu(t)^{p}\,dM_{t},
\]
where 
\[
\Phi_{p}(y,z)=y^{p}-pyz^{p-1}+(p-1)z^{p},\quad y,z\ge0,
\]
satisfies $\Phi_{p}(y,z)\geq 0$ with equality if and only if $y=z$. We hence obtain that
\begin{align*}
J(\nu)&=E_{0,x}[C^\nu_{T}]= \nu_{0}^{p}u(0,x)+E_{0,x}\bigg[\int_{0}^{T}\eta(t,X^t)\Phi_{p}\bigg(|\dot\nu(t)|,\nu(t)\Big(\frac{u(t,X^{t})}{\eta(t,X^{t})}\Big)^{q-1}\bigg)\,dt\bigg]
\end{align*}
and that $J(\nu)=\nu_0^{p}u(0,x)$ if and only if $P_{0,x}$-a.s.
\[
|\dot\nu(t)|=\nu(t)\Big(\frac{u(t,X^{t})}{\eta(t,X^{t})}\Big)^{q-1}\quad\text{for a.e.~$t\in[0,T]$.}
\]
This implies the assertion.
\end{proof}

\begin{proof}[Proof of Lemma~\ref{Test Function Lemma}]
As the second assertion is an immediate consequence of the first, we only show the first claim. From $\mathscr{U}_{r}^{L}\subset\mathscr{U}_{r,x}$ we directly get $\underline{\mathscr{A}}\,u(r,x) $ $\subset\underline{\mathscr{A}}^{L}u(r,x)$. The inclusion $\underline{\mathscr{A}}^{0}u(r,x)$ $\subset\underline{\mathscr{S\!P}}\,u(r,x)$ follows from $M_{T}^{r,0} = 1$ $P_{r,x}$-a.s. We notice that if $L'\geq 0$ is such that $L'\leq L$, then $\mathscr{U}_{r}^{L'} \subset \mathscr{U}_{r}^{L}$, which in turn gives us that $\underline{\mathscr{A}}^{L}u(r,x)\subset\underline{\mathscr{A}}^{L'}u(r,x)$. Hence, $\underline{\mathscr{A}}^{L}u(r,x) \subset \underline{\mathscr{A}}^{0}u(r,x)\subset\underline{\mathscr{S\!P}}\,u(r,x)$.  It remains to verify that
$\underline{\mathscr{P}}\,u(r,x)\subset\underline{\mathscr{A}}\,u(r,x)$.

Thus, let $\varphi\in\underline{\mathscr{P}}\,u(r,x)$, then $(u-\varphi)(r,x)$ $\geq (u-\varphi)(s,y)$ for all $(s,y)\in [r,T)\times S$ with $d_{S}((s,y),(r,x)) < \delta$ and some $\delta\in (0,T-r)$. We define $\tau:=\inf\{t\in [r,T]\,|\, \|X^{t} -x^{r}\|\geq \delta/3\}\wedge (r + \delta/2)$, then $\tau\in\mathscr{T}$ and $\tau > r$ $P_{r,x}$-a.s. Let $\widetilde{\tau}\in\mathscr{T}$ with $\widetilde{\tau}\geq r$, then
\[
d_{S}((\widetilde{\tau}\wedge\tau, X^{\widetilde{\tau}\wedge\tau}),(r,x)) \leq d_{S}((\tau,X^{\tau}),(r,x)) \leq \delta/2 + \|X^{\tau} - x^{r}\| < \delta
\]
on $\{X^{r} = x^{r}\}$. Hence, $(u-\varphi)(r,x) \geq (u-\varphi)(\widetilde{\tau}\wedge\tau,X^{\widetilde{\tau}\wedge\tau})$ on the same set. Let $\beta\in\mathscr{U}_{r,x}$, then, as $M^{r,\beta}$ is positive and $E_{r,x}[M_{T}^{r,\beta}]=1$, we get that $(u-\varphi)(r,x)\geq E_{r,x}[M_{T}^{r,\beta}(u-\varphi)(\widetilde{\tau}\wedge\tau,X^{\widetilde{\tau}\wedge\tau})]$. Thus, $\varphi\in\underline{\mathscr{A}}\,u(r,x)$.
\end{proof}

\subsection{Proofs of the main results}\label{Proof of the main results}

Let us begin with a crucial limit inequality.

\begin{Lemma}\label{Viscosity Limit Lemma}
Let $(r,x)\in [0,T)\times S$ and $\tau$ be an $(\mathscr{F}_{t})_{t\in [r,T]}$-stopping time. Assume that $\varphi\in B([r,T)\times S)$ is non-anticipative and the following two conditions hold:
\begin{enumerate}[(i)]
\item $\int_{r}^{t\wedge\tau}|\varphi(s,X^{s})|\,ds$ is finite and $P_{r,x}$-integrable for all $t\in [r,T)$.
\item There are $\zeta\in (0,T-r)$ and $c\geq 0$ so that $|\varphi(s,X^{s})|\leq c$ for all $s\in [r,(r+\zeta)\wedge\tau]$ $P_{r,x}$-a.s.
\end{enumerate}
If $\varphi$ is upper right-hand semicontinuous at $(r,x)$, then
\[
\limsup_{t\downarrow r}E_{r,x}\bigg[\int_{r}^{t\wedge\tau}\frac{\varphi(s,X^{s})}{t-r}\,ds\bigg] \leq \varphi(r,x)P_{r,x}(\tau > r).
\]
\end{Lemma}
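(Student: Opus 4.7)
My plan is to combine the upper right-hand semicontinuity of $\varphi$ at $(r,x)$ with the $P_{r,x}$-a.s.\ convergence of the stopped paths $X^{s}$ to $x^{r}$ as $s \downarrow r$. Given $\varepsilon > 0$, pick $\delta \in (0, \zeta)$ so that $\varphi(s,y) \leq \varphi(r,x) + \varepsilon$ for every $(s,y) \in [r, T) \times S$ with $d_{S}((s,y),(r,x)) < \delta$, and introduce the first exit time
\[
\tau_{\delta} := \inf\{s \in [r, T] : (s - r) + \rho(X^{s}, x^{r}) \geq \delta\} \wedge T.
\]
Because $X$ has continuous paths with $X^{r} = x^{r}$ $P_{r,x}$-a.s.\ (property (i) of a path-dependent diffusion process) and the metric $\rho$ on $\widetilde{S}$ satisfies $\rho \leq \|\cdot\|$, we have $\rho(X^{s}, x^{r}) \leq \sup_{u \in [r,s]} |X_{u} - X_{r}| \to 0$ $P_{r,x}$-a.s.\ as $s \downarrow r$. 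Hence $\tau_{\delta} > r$ $P_{r,x}$-almost surely.

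For $t \in (r, r + \zeta)$ and $s \in [r, t \wedge \tau)$ we have $s < (r + \zeta) \wedge \tau$, so condition~(ii) yields $\varphi(s, X^{s}) \leq c$; on the subset $\{s < \tau_{\delta}\}$ the choice of $\delta$ additionally gives $\varphi(s, X^{s}) \leq \varphi(r,x) + \varepsilon$. Splitting the integration domain into $\{s < \tau_{\delta}\}$ and $\{\tau_{\delta} \leq s\}$ and using each bound on its respective piece (the argument works regardless of the sign of $\varphi(r,x) + \varepsilon$) yields the $P_{r,x}$-a.s.\ pathwise inequality
\[
\int_{r}^{t \wedge \tau} \varphi(s, X^{s})\, ds \leq (\varphi(r,x) + \varepsilon)\,(t \wedge \tau \wedge \tau_{\delta} - r)^{+} + c\,(t - \tau_{\delta})^{+}.
\]

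Dividing by $t - r$ and taking $P_{r,x}$-expectation, both quotients $\frac{(t \wedge \tau \wedge \tau_{\delta} - r)^{+}}{t-r}$ and $\frac{(t - \tau_{\delta})^{+}}{t-r}$ lie in $[0,1]$; using $\tau_{\delta} > r$ $P_{r,x}$-a.s., as $t \downarrow r$ the first converges $P_{r,x}$-a.s.\ to $\mathbbm{1}_{\{\tau > r\}}$ and the second to $0$. Bounded convergence then gives
\[
\limsup_{t \downarrow r} E_{r,x}\bigg[\frac{1}{t-r}\int_{r}^{t \wedge \tau} \varphi(s, X^{s})\, ds\bigg] \leq (\varphi(r,x) + \varepsilon)\, P_{r,x}(\tau > r),
\]
and letting $\varepsilon \downarrow 0$ proves the claim. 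The main obstacle I anticipate is the $P_{r,x}$-a.s.\ strict positivity $\tau_{\delta} > r$: it relies on the compatibility $\rho \leq \|\cdot\|$ built into the setup of Section~\ref{Measurability section} together with the Markov-start identity $X^{r} = x^{r}$. Once this is secured, the remaining steps are a clean pathwise sandwich followed by bounded convergence, with the sign of $\varphi(r,x) + \varepsilon$ causing no difficulty because the leading term on the right-hand side is linear in it.
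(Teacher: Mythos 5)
Your proof is correct and follows essentially the same route as the paper: upper right-hand semicontinuity yields the bound $\varphi(r,x)+\varepsilon$ on a small $d_{S}$-ball which the ($P_{r,x}$-a.s.\ continuous, started at $x^{r}$) path cannot leave immediately, hypothesis (ii) supplies the bound $c$ elsewhere, and an interchange of limit and expectation finishes. The only cosmetic difference is bookkeeping: the paper argues pointwise in $\omega$ and applies Fatou's lemma to the limsup, whereas you encode the same estimate through the exit time $\tau_{\delta}$, use bounded convergence, and then let $\varepsilon\downarrow 0$.
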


\begin{proof}
Let $\varepsilon > 0$ and $\omega\in\{X^{r}=x^{r}\}\cap\{\tau > r\}$. Then there exists $\delta > 0$ such that $\varphi(s,y) < \varphi(r,x) + \varepsilon$ for every $(s,y)\in [r,T)\times S$ with $d_{S}((s,y),(r,x)) < \delta$. Since $X(\omega)$ is right-continuous, there is $\gamma\in (0,T-r)$ such that $\|X^{s}(\omega) - x^{r}\| < \delta/2$ for each $s\in [r,r + \gamma)$. We define $\eta:= \gamma\wedge(\delta/2)\wedge(\tau(\omega) - r)$, then
\[
\int_{r}^{t\wedge\tau(\omega)}\frac{\varphi(s,X^{s}(\omega))}{t-r}\,ds \leq \varphi(r,x) + \varepsilon
\]
for every $t\in (r,r+\eta)$, because $t < \tau(\omega)$ and $d_{S}((s,X^{s}(\omega)),(r,x))$ $= (s-r) + \|X^{s}(\omega) - x^{r}\| < \delta$ for all $s\in [r,t]$. Thus, we have shown that
\[
\limsup_{t\downarrow r}\int_{r}^{t\wedge\tau}\frac{\varphi(s,X^{s})}{t-r}\,ds\leq \varphi(r,x) \quad P_{r,x}\text{-a.s.~on $\{\tau > r\}$.}
\]
Since $\int_{r}^{t\wedge\tau}|\varphi(s,X^{s})|\,ds\leq c(t-r)$ for each $t\in [r,r+\zeta]$ $P_{r,x}$-a.s., the claim follows from Fatou's lemma.
\end{proof}

This allows us to prove our first main result.

\begin{proof}[Proof of Theorem~\ref{Main Result 1}]
We consider the case that $u$ is a mild subsolution. Let $(r,x)\in [0,T)\times S$ and $\varphi\in\underline{\mathscr{S\!P}}\,u(r,x)$. Then there are $\delta\in (0,T-r)$ and some $(\mathscr{F}_{t})_{t\in [r,T]}$-stopping time $\tau$ with $P_{r,x}(\tau > r) > 0$ such that
\begin{equation}\label{Viscosity Maximum}
(u-\varphi)(r,x) \geq E_{r,x}[(u-\varphi)(\widetilde{\tau}\wedge\tau,X^{\widetilde{\tau}\wedge\tau})]
\end{equation}
for each $\widetilde{\tau}\in\mathscr{T}$ with $\widetilde{\tau}\in [r,r+\delta)$. As the functions $[0,T)\times S\rightarrow\mathbb{R}$, $(s,y)\mapsto (\partial_{s} +\mathscr{L})(\varphi)(s,y)$ and $[0,T)\times S\rightarrow\mathbb{R}$, $(s,y)\mapsto f(s,y,u(s,y))$ are right-continuous at $(r,x)$, they are right-hand locally bounded there. That is, there are $c\geq 0$ and $\gamma\in (0,\delta]$ such that 
\[
|(\partial_{s} +\mathscr{L})(\varphi)(s,y)|\vee|f(s,y,u(s,y))|\leq c
\]
for each $(s,y)\in [r,T)\times S$ with $d_{S}((s,y),(r,x)) < \gamma$. We set $\widetilde{\tau}:=\inf\{t\in [r,T]\,|\, \|X^{t} - x^{r}\| \geq \gamma/2\}\wedge T\in\mathscr{T}$, which gives $d_{S}((s,X^{s}(\omega)),(r,x)) < \gamma$ for all $\omega\in\{X^{r}=x^{r}\}$ and each $s\in [r,(r+\gamma/3)\wedge\widetilde{\tau}(\omega)]$. Let $\hat{\tau}:=\widetilde{\tau}\wedge\tau$, then the stopped process $[r,T)\times\Omega\rightarrow\mathbb{R}$,
\[
(t,\omega)\mapsto \varphi(t\wedge\hat{\tau}(\omega),X^{t\wedge\hat{\tau}}(\omega)) - \int_{r}^{t\wedge\hat{\tau}(\omega)}(\partial_{s} + \mathscr{L})(\varphi)(s,X^{s}(\omega))\,ds
\]
is an $(\mathscr{F}_{t})_{t\in [r,T)}$-martingale under $P_{r,x}$, as the $\mathscr{L}$-martingale property of $\mathscr{X}$ and optional sampling entail. Moreover, because $u$ is a mild subsolution to \eqref{Parabolic Terminal Value Problem}, it follows that 
\begin{align*}
E_{r,x}[(u-\varphi)(t\wedge\hat{\tau},X^{t\wedge\hat{\tau}})] &\geq (u-\varphi)(r,x) + E_{r,x}\bigg[\int_{r}^{t\wedge\hat{\tau}}f(s,X^{s},u(s,X^{s}))\,ds\bigg]\\
&\quad - E_{r,x}\bigg[\int_{r}^{t\wedge\hat{\tau}}(\partial_{s} + \mathscr{L})(\varphi)(s,X^{s})\,ds\bigg]\\
\end{align*}
for all $t\in [r,T)$. Hence, we obtain from \eqref{Viscosity Maximum} that
\[
\frac{1}{t-r} E_{r,x}\bigg[\int_{r}^{t\wedge\hat{\tau}}(\partial_{s} + \mathscr{L})(\varphi)(s,X^{s})\,ds\bigg]\geq \frac{1}{t-r}E_{r,x}\bigg[\int_{r}^{t\wedge\hat{\tau}}f(s,X^{s},u(s,X^{s}))\,ds\bigg]
\]
for each $t\in (r,r+\gamma/3)$. Because $|(\partial_{s} + \mathscr{L})(\varphi)(s,X^{s})|\vee|f(s,X^{s},u(s,X^{s}))|\leq c$ for every $s\in [r,(r+\gamma/3)\wedge\hat{\tau}]$ $P_{r,x}$-a.s., Lemma~\ref{Viscosity Limit Lemma} allows us to take the limit $t\downarrow r$, which establishes that
\[
(\partial_{r} + \mathscr{L})(\varphi)(r,x)\geq f(r,x,u(r,x)).
\]
Thus, $u$ is a stochastic viscosity subsolution to \eqref{Parabolic Terminal Value Problem}. Eventually, if $u$ is a mild supersolution, then similar arguments yield that it is also a stochastic viscosity supersolution.
\end{proof}

From now on, we let $\alpha,\beta\in B([0,T)\times S)$ be two non-anticipative functions such that $\alpha(\cdot,x)$ and $\beta(\cdot,x)$ are locally integrable for each $x\in S$ and $f(t,x,z) = \alpha(t,x) + \beta(t,x)z$ for all $(t,x,z)\in [0,T)\times S\times D$. Then we can verify another limit equality without assuming right-continuity of the mild solution in question.

\begin{Lemma}\label{Viscosity Limit Lemma 2}
Let $(r,x)\in [0,T)\times S$ and $\tau$ be an $(\mathscr{F}_{t})_{t\in [r,T]}$-stopping time. Suppose that $\beta$ is right-continuous at $(r,x)$, and there are $\zeta\in (0,T-r)$ and $c\geq 0$ such that $|\beta(s,X^{s})|\leq c$ for all $s\in [r,(r + \zeta)\wedge\tau]$ $P_{r,x}$-a.s. Then each mild solution $u$ to \eqref{Parabolic Terminal Value Problem} fulfills
\[
\lim_{t\downarrow r}E_{r,x}\bigg[\int_{r}^{t\wedge\tau} \frac{\beta(s,X^{s})}{t-r}u(s,X^{s})\,ds\bigg] = \beta(r,x)u(r,x)P_{r,x}(\tau > r).
\]
\end{Lemma}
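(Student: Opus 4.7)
My plan is to split the integrand as $\beta(s, X^{s}) u(s, X^{s}) = \beta(r, x) u(s, X^{s}) + (\beta(s, X^{s}) - \beta(r, x)) u(s, X^{s})$ and handle the two pieces separately. For the remainder piece, the continuity of the paths of $X$ combined with the right-continuity of $\beta$ at $(r, x)$ forces $\beta(s, X^{s}) \to \beta(r, x)$ as $s \downarrow r$ $P_{r, x}$-a.s.\ on $\{X^{r} = x^{r}\}$. Since $|\beta(s, X^{s}) - \beta(r, x)| \le c + |\beta(r, x)|$ on $[r, (r+\zeta)\wedge\tau]$ and $u$ is bounded, dominated convergence yields
\[
\lim_{t \downarrow r} \frac{1}{t - r} E_{r, x}\bigg[\int_{r}^{t \wedge \tau} |\beta(s, X^{s}) - \beta(r, x)|\,|u(s, X^{s})|\,ds\bigg] = 0.
\]
It therefore suffices to establish $\lim_{t \downarrow r}\frac{1}{t-r} E_{r, x}[\int_{r}^{t \wedge \tau} u(s, X^{s})\,ds] = u(r, x) P_{r,x}(\tau > r)$.

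By Fubini this reduces, via the standard Cesàro-averaging argument for bounded functions, to proving the right-continuity at $s = r$ of
\[
\psi(s) := E_{r, x}[\mathbbm{1}_{\{s < \tau\}} u(s, X^{s})], \qquad \psi(r) = u(r, x) P_{r, x}(\tau > r).
\]
Since I am not allowed to assume any continuity of $u$, the key step is to encode the irregular time dependence via the mild solution identity. Setting $Y := g(X^{T}) - \int_{r}^{T} f(w, X^{w}, u(w, X^{w}))\,dw$, which lies in $L^{1}(P_{r,x})$ because $g$ is bounded and $\int_{r}^{T} |f|\,dw$ is $P_{r, x}$-integrable by the definition of a mild solution, and applying Lemma~\ref{Characterization of Mild Solutions} at $(s, X^{s})$ together with the strong Markov property of $\hat{\mathscr{X}}$, I obtain the decomposition
\[
u(s, X^{s}) = N_{s} + A_{s}\quad P_{r, x}\text{-a.s.}, \qquad N_{s} := E_{r, x}[Y \mid \mathscr{F}_{s}],\quad A_{s} := \int_{r}^{s} f(w, X^{w}, u(w, X^{w}))\,dw.
\]
In particular, $N$ is a uniformly integrable $(\mathscr{F}_{t})_{t \in [r,T]}$-martingale under $P_{r,x}$ with $N_{r} = u(r, x)$.

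To conclude, I would analyze the two pieces of $\psi(s) = E_{r, x}[\mathbbm{1}_{\{s < \tau\}} N_{s}] + E_{r, x}[\mathbbm{1}_{\{s < \tau\}} A_{s}]$. The $A$-part tends to $0$ by dominated convergence, using the majorant $\int_{r}^{T} |f(w, X^{w}, u(w, X^{w}))|\,dw \in L^{1}(P_{r,x})$ and the pointwise vanishing $A_{s} \to 0$. For the $N$-part, optional sampling of the uniformly integrable martingale $N$ at the stopping time $s \wedge \tau$ gives
\[
E_{r, x}[\mathbbm{1}_{\{s < \tau\}} N_{s}] + E_{r, x}[\mathbbm{1}_{\{\tau \le s\}} N_{\tau}] = E_{r, x}[N_{s \wedge \tau}] = N_{r} = u(r, x),
\]
while $\mathbbm{1}_{\{\tau \le s\}} \to \mathbbm{1}_{\{\tau = r\}}$ as $s \downarrow r$ and $|N_{\tau}| \in L^{1}(P_{r, x})$ together yield $E_{r, x}[\mathbbm{1}_{\{\tau \le s\}} N_{\tau}] \to u(r, x) P_{r, x}(\tau = r)$ by dominated convergence. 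Adding the two limits gives $\psi(s) \to u(r, x)(1 - P_{r, x}(\tau = r)) = \psi(r)$, which completes the argument.

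The main obstacle is precisely that $u$ carries no a priori regularity, so neither $u(s, X^{s})$ nor $\psi$ is obviously right-continuous at $r$ and a naive Cesàro differentiation is not available. The martingale decomposition above is the decisive structural observation: it absorbs all of the irregular time dependence of $u(s, X^{s})$ into a uniformly integrable martingale whose behavior at the deterministic time $r$ is fully controlled by $N_{r} = u(r, x)$ and thus accessible through optional sampling plus two applications of dominated convergence.
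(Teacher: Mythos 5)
Your overall mechanism is sound and its bookkeeping genuinely differs from the paper's proof. The paper substitutes $\phi(s,X)$, with $\phi$ from \eqref{Mild Solution Representation}, for $u(s,X^{s})$ under the expectation via Fubini and the Markov property of $\hat{\mathscr{X}}$, then takes a pathwise Ces\`aro limit of $\beta(s,X^{s})\phi(s,X)$ on $\{\tau>r\}$ and concludes by dominated convergence with the explicit majorant \eqref{Crucial Linear Viscosity Inequality}. You instead freeze $\beta$ at $(r,x)$, reduce to right-continuity at $s=r$ of $\psi(s)=E_{r,x}[\mathbbm{1}_{\{s<\tau\}}u(s,X^{s})]$, and obtain that from the decomposition $u(s,X^{s})=N_{s}+A_{s}$ (which is exactly $E_{r,x}[\phi(s,X)\,|\,\mathscr{F}_{s}]$, i.e.\ Lemma~\ref{Characterization of Mild Solutions} plus the Markov property) together with optional sampling and $P_{r,x}(\tau\le s)\downarrow P_{r,x}(\tau=r)$. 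These steps are correct; only note that, since no usual conditions are imposed, a right-continuous version of $N$ is not available for free, so you should read $N_{\sigma}:=E_{r,x}[Y\,|\,\mathscr{F}_{\sigma}]$ for stopping times $\sigma$ and justify $\mathbbm{1}_{\{s<\tau\}}N_{s}+\mathbbm{1}_{\{\tau\le s\}}N_{\tau}=N_{s\wedge\tau}$ and $\mathbbm{1}_{\{\tau=r\}}N_{\tau}=\mathbbm{1}_{\{\tau=r\}}u(r,x)$ $P_{r,x}$-a.s.\ by the standard localization argument based on $\mathscr{F}_{s\wedge\tau}=\mathscr{F}_{s}\cap\mathscr{F}_{\tau}$; with that reading your identities hold.

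The genuine gap is the assumption that $u$ is bounded, which the lemma does not make: it is stated for every mild solution to \eqref{Parabolic Terminal Value Problem}, whose only regularity is the integrability built into the definition (the paper reserves boundedness assumptions for Lemma~\ref{Viscosity Limit Lemma 4} and Theorem~\ref{Main Result 2}). You invoke boundedness twice: to dominate $\frac{1}{t-r}\int_{r}^{t\wedge\tau}|\beta(s,X^{s})-\beta(r,x)|\,|u(s,X^{s})|\,ds$, and in the Fubini/Ces\`aro step for $\psi$. For an unbounded mild solution the first step has no integrable majorant as written and fails. The repair is exactly the device you already use for the main term: bound $|u(s,X^{s})|\le E_{r,x}[\,|\phi(s,X)|\,|\,\mathscr{F}_{s}]$ and pass (Fubini plus tower property, since $\mathbbm{1}_{\{s<\tau\}}$ and $\beta(s,X^{s})$ are $\mathscr{F}_{s}$-measurable) to $|\phi(s,X)|$ under the time integral, after which $\sup_{s\in[r,T]}|\phi(s,X)|\le |g(X^{T})|+\int_{r}^{T}|f(w,X^{w},u(w,X^{w}))|\,dw\in L^{1}(P_{r,x})$ is a dominating function — precisely the paper's majorant \eqref{Crucial Linear Viscosity Inequality} — while $E_{r,x}|u(s,X^{s})|\le\|g\|_{\infty}+E_{r,x}\int_{r}^{T}|f|\,dw$, uniformly in $s$, justifies the Fubini step for $\psi$. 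With these modifications your argument goes through in the stated generality; as written it only proves the bounded case (which is, admittedly, the case used in the proof of Theorem~\ref{Main Result 2}).
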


\begin{proof}
We define $\phi:[0,T]\times S\rightarrow\mathbb{R}$ by \eqref{Mild Solution Representation}, then the Borel measurable function $[0,T)\rightarrow\mathbb{R}$, $s\mapsto\beta(s,X^{s}(\omega))\phi(s,X(\omega))$ is locally integrable for each $\omega\in\Omega$. Moreover,
\begin{equation}\label{Crucial Linear Viscosity Inequality}
\begin{split}
\int_{r}^{t\wedge\tau}|\beta(s,X^{s})\phi(s,X)|\,ds &\leq c(t-r)|g(X^{T})|\\
&\quad + c(t-r)\int_{r}^{T}|\alpha(s,X^{s}) + \beta(s,X^{s})u(s,X^{s})|\,ds
\end{split}
\end{equation}
for all $t\in [r,r+\zeta]$ $P_{r,x}$-a.s. As the right-hand expression is finite and $P_{r,x}$-integrable, it follows from Fubini's theorem and the Markov property of $\hat{\mathscr{X}}$ that
\[
E_{r,x}\bigg[\int_{r}^{t\wedge\tau}|\beta(s,X^{s})|E_{s,X^{s}}[|\phi(s,X)|]\,ds\bigg] = E_{r,x}\bigg[\int_{r}^{t\wedge\tau}|\beta(s,X^{s})\phi(s,X)|\,ds\bigg] < \infty
\]
for each $t\in [r,r+\zeta]$. Because $|u(s,X^{s})| \leq E_{s,X^{s}}[|\phi(s,X)|]$ for all $s\in [r,T]$, another application of Fubini's theorem and the Markov property of $\hat{\mathscr{X}}$ yield that
\[
E_{r,x}\bigg[\int_{r}^{t\wedge\tau}\beta(s,X^{s})u(s,X^{s})\,ds\bigg] = E_{r,x}\bigg[\int_{r}^{t\wedge\tau}\beta(s,X^{s})\phi(s,X)\,ds\bigg]
\]
for every $t\in [r,r+\zeta]$. The next step is to choose some $P_{r,x}$-null set $N\in\mathscr{F}$ such that $|\beta(s,X^{s}(\omega))|\leq c$ for all $\omega\in N^{c}$ and each $s\in [r,(r+\zeta)\wedge\tau(\omega)]$. We let $\varepsilon > 0$ and $\omega\in N^{c}\cap\{X^{r} = x^{r}\}\cap\{\tau > r\}$, then the right-continuity of $\beta$ at $(r,x)$ yields $\delta > 0$ such that
\[
|\phi(r,X(\omega))||\beta(s,y) - \beta(r,x)| < \varepsilon/2
\]
for all $(s,y)\in [r,T)\times S$ with $d_{S}((s,y),(r,x)) < \delta$. Since $X(\omega)$ and the function $[0,T]\rightarrow\mathbb{R}$, $s\mapsto\phi(s,X(\omega))$ are right-continuous, we get $\gamma\in (0,T-r)$ such that $\|X^{s}(\omega) - x^{r}\| < \delta/2$ and $c|\phi(s,X(\omega)) - \phi(r,X(\omega))| < \varepsilon/2$ for each $s\in [r,r+\gamma)$. Consequently, $|\beta(s,X^{s}(\omega))\phi(s,X(\omega)) - \beta(r,x)\phi(r,X(\omega))| < \varepsilon$ for every $s\in [r,r+\eta)$, where $\eta:=\gamma\wedge(\delta/2)\wedge\zeta\wedge(\tau(\omega) - r)$. Hence,
\[
\bigg|\int_{r}^{t\wedge\tau(\omega)}\frac{\beta(s,X^{s}(\omega))}{t-r}\phi(s,X(\omega))\,ds - \beta(r,x)\phi(r,X(\omega))\bigg| < \varepsilon
\]
for all $t\in (r,r+\eta)$. Therefore, we have proven that
\[
\lim_{t\downarrow r}\int_{r}^{t\wedge\tau}\frac{\beta(s,X^{s})}{t-r}\phi(s,X)\,ds = \beta(r,x)\phi(r,X) \quad \text{$P_{r,x}$-a.s.~on $\{\tau > r\}$.}
\]
Because $E_{r,x}[\phi(r,X)\mathbbm{1}_{\{\tau > r\}}] = E_{r,x}[E_{r,x}[\phi(r,X)|\mathscr{F}_{r}]\mathbbm{1}_{\{\tau > r\}}] = u(r,x)P_{r,x}(\tau > r)$ and \eqref{Crucial Linear Viscosity Inequality} holds, the claim follows from dominated convergence.
\end{proof}

We are now concerned with another decisive limit inequality.

\begin{Lemma}\label{Viscosity Limit Lemma 3}
Let $(r,x)\in [0,T)\times S$ and $\varphi\in B([r,T)\times S)$ be non-anticipative. Suppose that $(r_{n},x_{n})_{n\in\mathbb{N}}$ is a sequence in $[r,T)\times S$, $(t_{n})_{n\in\mathbb{N}}$ is a sequence in $[r,T)$, and $(\tau_{n})_{n\in\mathbb{N}}$ is a sequence of $(\mathscr{F}_{t})_{t\in [0,T]}$-stopping times such that the following three conditions hold:
\begin{enumerate}[(i)]
\item $\tau_{n} > r_{n}$ $P_{r_{n},x_{n}}$-a.s., $\tau_{n}\geq r_{n}$, and $r_{n} < t_{n}$ for each $n\in\mathbb{N}$. In addition, $\lim_{n\uparrow\infty} (r_{n},x_{n})=(r,x)$ and $\lim_{n\uparrow\infty} t_{n} = r$.
\item $\int_{r_{n}}^{t_{n}\wedge\tau_{n}}|\varphi(s,X^{s})|\,ds$ is finite for all $n\in\mathbb{N}$ and there exists $c\geq 0$ such that $|\varphi(s,X^{s})|\leq c$ for all $s\in [r_{n},t_{n}\wedge\tau_{n}]$ $P_{r_{n},x_{n}}$-a.s.~for every $n\in\mathbb{N}$.
\item $\lim_{n\uparrow\infty} P_{r_{n},x_{n}}(\tau_{n}\leq t_{n}) = 0$ and $\lim_{n\uparrow\infty} P_{r_{n},x_{n}}(\|X^{t_{n}} - x_{n}^{r_{n}}\|\geq \gamma) = 0$ for each $\gamma > 0$.
\end{enumerate}
If $\varphi$ is upper right-hand semicontinuous at $(r,x)$, then
\[
\limsup_{n\uparrow\infty} E_{r_{n},x_{n}}\bigg[\int_{r_{n}}^{t_{n}\wedge\tau_{n}}\frac{\varphi(s,X^{s})}{t_{n}-r_{n}}\,ds\bigg]
\leq \varphi(r,x).
\]
\end{Lemma}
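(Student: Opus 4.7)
The plan is to adapt the single-base-point argument of Lemma~\ref{Viscosity Limit Lemma} to a moving sequence $(r_{n}, x_{n})$, splitting the expectation into a ``good'' piece on which $(s, X^{s})$ stays $d_{S}$-close to $(r, x)$ for every $s \in [r_{n}, t_{n} \wedge \tau_{n}]$ (so that upper right-hand semicontinuity of $\varphi$ can be applied pointwise), and a ``bad'' piece whose probability is driven to zero by hypothesis (iii), the uniform bound $c$ from (ii) controlling the integrand there.

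Concretely, I would fix $\varepsilon > 0$ and use upper right-hand semicontinuity at $(r, x)$ to pick $\delta \in (0, T - r)$ so that $\varphi(s, y) < \varphi(r, x) + \varepsilon$ whenever $s \geq r$ and $d_{S}((s, y), (r, x)) < \delta$. Since $\rho(x_{n}^{r_{n}}, x^{r}) \to 0$ and the limit $x^{r}$ is continuous, the standard fact that Skorohod convergence to a continuous limit is uniform upgrades this to $\|x_{n}^{r_{n}} - x^{r}\| \to 0$. Combined with $t_{n} \to r$, pick $n_{0}$ so that $(t_{n} - r) + \|x_{n}^{r_{n}} - x^{r}\| < \delta/2$ for $n \geq n_{0}$, and set the good event
\[
A_{n} := \{\|X^{t_{n}} - x_{n}^{r_{n}}\| < \delta/4\} \cap \{\tau_{n} > t_{n}\},
\]
for which hypothesis (iii) gives $P_{r_{n}, x_{n}}(A_{n}^{c}) \to 0$. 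Using $X^{r_{n}} = x_{n}^{r_{n}}$ $P_{r_{n}, x_{n}}$-a.s., an elementary computation shows $\|X^{s} - x_{n}^{r_{n}}\| = \sup_{u \in [r_{n}, s]} |X(u) - X(r_{n})| \leq \|X^{t_{n}} - x_{n}^{r_{n}}\|$ for every $s \in [r_{n}, t_{n}]$, so on $A_{n}$,
\[
d_{S}((s, X^{s}), (r, x)) \leq (t_{n} - r) + \|X^{s} - x_{n}^{r_{n}}\| + \|x_{n}^{r_{n}} - x^{r}\| < \delta,
\]
which forces $\varphi(s, X^{s}) < \varphi(r, x) + \varepsilon$ throughout the integration interval. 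Since $t_{n} \wedge \tau_{n} = t_{n}$ on $A_{n}$, averaging then bounds the inner integral by $\varphi(r, x) + \varepsilon$ on $A_{n}$.

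Decomposing the expectation along $A_{n}$ and $A_{n}^{c}$ and using $|\varphi(s, X^{s})| \leq c$ on the integration interval yields
\[
E_{r_{n}, x_{n}}\bigg[\int_{r_{n}}^{t_{n} \wedge \tau_{n}} \frac{\varphi(s, X^{s})}{t_{n} - r_{n}}\,ds\bigg] \leq (\varphi(r, x) + \varepsilon)\, P_{r_{n}, x_{n}}(A_{n}) + c\, P_{r_{n}, x_{n}}(A_{n}^{c}),
\]
after which $n \uparrow \infty$ (so $P_{r_{n}, x_{n}}(A_{n}) \to 1$ and $P_{r_{n}, x_{n}}(A_{n}^{c}) \to 0$) followed by $\varepsilon \downarrow 0$ gives the claim; the sign of $\varphi(r, x) + \varepsilon$ is harmless since in either case the right-hand side tends to $\varphi(r, x) + \varepsilon$. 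The main obstacle is the topological bookkeeping: convergence $(r_{n}, x_{n}) \to (r, x)$ in $d_{S}$ only supplies Skorohod closeness of the stopped initial pieces, so the sup-norm distance along the \emph{random} trajectory must be controlled by combining the Skorohod-to-uniform upgrade (crucially using that $x^{r}$ is continuous) with the purely probabilistic smallness furnished by hypothesis (iii). Once this is in place, the remainder is a routine good-event/bad-event split modeled on Lemma~\ref{Viscosity Limit Lemma}.
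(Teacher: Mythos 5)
Your proposal is correct and follows essentially the same route as the paper: a good-set/bad-set split in which the monotonicity of $s\mapsto\|X^{s}-x_{n}^{r_{n}}\|$ and the choice of $n_{0}$ make the $d_{S}$-distance to $(r,x)$ small on the good part (so semicontinuity applies), while hypothesis (ii) bounds the integrand and hypothesis (iii) kills the bad part. The only cosmetic differences are that the paper performs the split pointwise in $s$ via the auxiliary stopping time $\sigma_{n}$ rather than on a single event $A_{n}$, and it avoids your (valid but unnecessary) Skorohod-to-uniform upgrade by estimating $\rho(X^{s},x^{r})\leq\|X^{s}-x_{n}^{r_{n}}\|+\rho(x_{n}^{r_{n}},x^{r})$ directly inside $d_{S}$.
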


\begin{proof}
Let $\varepsilon > 0$, then there is some $\delta > 0$ such that $\varphi(s,y) < \varphi(r,x) + \varepsilon/4$ for each $(s,y)\in [r,T)\times S$ with $d_{S}((s,y),(r,x)) < \delta$. By (i), we can choose $n_{0}\in\mathbb{N}$ such that
\[
(t_{n} -r_{n}) + d_{S}((r_{n},x_{n}),(r,x)) < \delta/2
\]
for all $n\in\mathbb{N}$ with $n\geq n_{0}$. Moreover, for each $n\in\mathbb{N}$ we let $Y^{(n)}:[r_{n},T]\times\Omega\rightarrow\mathbb{R}_{+}$ be given by $Y_{s}^{(n)}(\omega):=\|X^{s}(\omega)-x_{n}^{r_{n}}\|$ and set $\sigma_{n}:=\inf\{t\in [r_{n},T]\,|\, \|X^{t}-x_{n}^{r_{n}}\|\geq\delta/2\}$, then $Y^{(n)}$ is an $(\mathscr{F}_{t})_{t\in [r_{n},T]}$-adapted process with increasing continuous paths and $\sigma_{n}$ is an $(\mathscr{F}_{t})_{t\in [r_{n},T]}$-stopping time with $\sigma_{n} > r_{n}$ $P_{r_{n},x_{n}}$-a.s.~and $\{\sigma_{n} > s \} = \{Y_{s}^{(n)} < \delta/2\}$ for all $s\in [r_{n},T]$. This yields that
\[
E_{r_{n},x_{n}}\bigg[\int_{r_{n}}^{t_{n}\wedge\tau_{n}}\frac{\varphi(s,X^{s})}{t_{n}-r_{n}}\mathbbm{1}_{\big\{Y_{s}^{(n)} < \delta/2\big\}}\,ds\bigg] \leq \frac{\varphi(r,x)}{t_{n}-r_{n}}E_{r_{n},x_{n}}[(t_{n}\wedge\tau_{n}\wedge\sigma_{n} -r_{n})] + \varepsilon/4
\]
for every $n\in\mathbb{N}$ with $n\geq n_{0}$, since $d_{S}((s,X^{s}(\omega)),(r,x)) \leq (s-r_{n}) + Y_{s}^{(n)}(\omega)$ $+\, d_{S}((r_{n},x_{n}),(r,x))$ $< \delta$ for all $\omega\in\{X^{r_{n}}=x^{r_{n}}\}$ and each $s\in [r_{n},t_{n}\wedge\tau_{n}(\omega)\wedge\sigma_{n}(\omega)]$. We observe that
\[
\frac{1}{t_{n}-r_{n}}E_{r_{n},x_{n}}[(t_{n} - t_{n}\wedge\tau_{n}\wedge\sigma_{n})] \leq P_{r_{n},x_{n}}(\tau_{n}\leq t_{n}) + P_{r_{n},x_{n}}(Y_{t_{n}}^{(n)}\geq\delta/2)
\]
for each $n\in\mathbb{N}$, since $(t_{n} - t_{n}\wedge\tau_{n}\wedge\sigma_{n}) = (t_{n} - \tau_{n}\wedge\sigma_{n})\mathbbm{1}_{\{\tau_{n}\wedge\sigma_{n}\leq t_{n}\}}$ $\leq (t_{n}-r_{n})\mathbbm{1}_{\{\tau_{n}\wedge\sigma_{n}\leq t_{n}\}}$. At the same time it follows from (ii) that
\[
E_{r_{n},x_{n}}\bigg[\int_{r_{n}}^{t_{n}\wedge\tau_{n}}\frac{|\varphi(s,X^{s})|}{t_{n}-r_{n}}\mathbbm{1}_{\big\{Y_{s}^{(n)}\geq\delta/2\big\}}\,ds\bigg] \leq c P_{r_{n},x_{n}}(Y_{t_{n}}^{(n)}\geq\delta/2)
\]
for all $n\in\mathbb{N}$. For $c':=c\vee |\varphi(r,x)|$ there is $n_{1}\in\mathbb{N}$ such that $c' P_{r_{n},x_{n}}(\tau_{n}\leq t_{n}) < \varepsilon/4$ and $c' P_{r_{n},x_{n}}(Y_{t_{n}}^{(n)}\geq\delta/2) < \varepsilon/4$ for all $n\in\mathbb{N}$ with $n\geq n_{1}$, due to (iii). Hence, we set $n_{2}:=n_{0}\vee n_{1}$, then we obtain that
\begin{align*}
E_{r_{n},x_{n}}\bigg[\int_{r_{n}}^{t_{n}\wedge\tau_{n}}\frac{\varphi(s,X^{s})}{t_{n}-r_{n}}\,ds\bigg] &\leq \varphi(r,x) +  \frac{c'}{t_{n}-r_{n}}E_{r_{n},x_{n}}[(t_{n}-t_{n}\wedge\tau_{n}\wedge\sigma_{n})] + \varepsilon/2\\
&< \varphi(r,x) + \varepsilon
\end{align*}
for each $n\in\mathbb{N}$ with $n\geq n_{2}$. This entails the assertion.
\end{proof}

To clarify the way we proceed, note that for every function $u:[0,T]\times S\rightarrow\mathbb{R}$ that is right-hand locally bounded from above and each $(r,x)\in [0,T)\times S$, there is a sequence $(r_{n},x_{n})_{n\in\mathbb{N}}$ in $[r,T)\times S$ with $\lim_{n\uparrow\infty} (r_{n},x_{n})=(r,x)$ and $\lim_{n\uparrow\infty} u(r_{n},x_{n}) = u^{\leftarrow}(r,x)$. This technique is well-known in the literature of viscosity solutions (see for example Pham~\cite{PhamStochControl}[Section 4.3]). We now use the natural filtration $(\hat{\mathscr{F}}_{t})_{t\in [0,T]}$ of $X$.

\begin{Lemma}\label{Viscosity Limit Lemma 4}
Let $(r,x)\in [0,T)\times S$, $\beta$ be right-continuous at $(r,x)$, and  $u$ be a right-hand locally bounded mild solution to \eqref{Parabolic Terminal Value Problem}. Suppose that $(r_{n},x_{n})_{n\in\mathbb{N}}$ is a sequence in $[r,T)\times S$, $(t_{n})_{n\in\mathbb{N}}$ is a sequence in $[r,T)$, and $(\tau_{n})_{n\in\mathbb{N}}$ is a sequence of $(\hat{\mathscr{F}}_{t})_{t\in [0,T]}$-stopping times such that the following three conditions hold:
\begin{enumerate}[(i)]
\item $\tau_{n} > r_{n}$ $P_{r_{n},x_{n}}$-a.s., $\tau_{n}\geq r_{n}$, and $r_{n} < t_{n}$ for every $n\in\mathbb{N}$. Furthermore, $\lim_{n\uparrow\infty}(r_{n},x_{n})=(r,x)$, $\lim_{n\uparrow\infty} u(r_{n},x_{n})$ $= u^{\leftarrow}(r,x)$, and $\lim_{n\uparrow\infty} t_{n} = r$.
\item There is $c\geq 0$ such that $|\alpha(s,X^{s})|\vee|\beta(s,X^{s})|\vee|u(s,X^{s})|\leq c$ for each $s\in [r_{n},t_{n}\wedge\tau_{n}]$ $P_{r_{n},x_{n}}$-a.s.~for every $n\in\mathbb{N}$.
\item $\lim_{n\uparrow\infty} P_{r_{n},x_{n}}(\tau_{n}\leq t_{n}) = 0$ and $\lim_{n\uparrow\infty} P_{r_{n},x_{n}}(\|X^{t_{n}} - x_{n}^{r_{n}}\|\geq \gamma) = 0$ for all $\gamma > 0$.
\end{enumerate}
Then
\[
\lim_{n\uparrow\infty} E_{r_{n},x_{n}}\bigg[\int_{r_{n}}^{t_{n}\wedge\tau_{n}}\frac{\beta(s,X^{s})}{t_{n}-r_{n}}u(s,X^{s})\,ds\bigg]
= \beta(r,x)u^{\leftarrow}(r,x).
\]
\end{Lemma}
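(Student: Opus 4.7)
The plan is to extend the reasoning of Lemma~\ref{Viscosity Limit Lemma 2} to the sequential setting $(r_n,x_n)\to(r,x)$ by combining a Markov reduction with an optional sampling argument. The principal difficulty is that $\phi(r_n,X)$ given by \eqref{Mild Solution Representation} is only $P_{r_n,x_n}$-integrable and, in general, not uniformly bounded in $n$ under the varying measures $P_{r_n,x_n}$.

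Using Fubini together with the Markov identity $u(s,X^s)=E_{r_n,x_n}[\phi(s,X)|\hat{\mathscr{F}}_s]$, one first rewrites the target expression with $u(s,X^s)$ replaced by $\phi(s,X)$. Because $\phi(s,X)-\phi(r_n,X)=\int_{r_n}^{s}f(t,X^t,u(t,X^t))\,dt$ has modulus bounded by $c(1+c)(t_n-r_n)$ on $[r_n,t_n\wedge\tau_n]$ by (ii), replacing $\phi(s,X)$ by $\phi(r_n,X)$ introduces only an $O(t_n-r_n)\to 0$ error. Splitting $\beta(s,X^s)=\beta(r,x)+(\beta(s,X^s)-\beta(r,x))$ and writing $t_n\wedge\tau_n-r_n=(t_n-r_n)-(t_n-t_n\wedge\tau_n)$, the main contribution reduces to
\[
\beta(r,x)u(r_n,x_n)-\beta(r,x)E_{r_n,x_n}\bigg[\phi(r_n,X)\frac{t_n-t_n\wedge\tau_n}{t_n-r_n}\bigg],
\]
whose first summand converges to $\beta(r,x)u^{\leftarrow}(r,x)$ by (i) together with the integral equation $E_{r_n,x_n}[\phi(r_n,X)]=u(r_n,x_n)$.

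The main obstacle is to show that the correction above vanishes despite the lack of a uniform $L^{1}$-bound on $\phi(r_n,X)$. I will resolve it via optional sampling applied to the closed, hence uniformly integrable, martingale
\[
\tilde M_t:=E_{r_n,x_n}[\phi(r_n,X)|\hat{\mathscr{F}}_t]=u(t,X^t)-\int_{r_n}^{t}f(s,X^s,u(s,X^s))\,ds,
\]
where the second equality follows from the Markov property and the integral equation. Since $(t_n-t_n\wedge\tau_n)/(t_n-r_n)\leq\mathbbm{1}_{\{\tau_n\leq t_n\}}$ and $\{\tau_n\leq t_n\}\in\hat{\mathscr{F}}_{\tau_n\wedge t_n}$, optional sampling produces $E_{r_n,x_n}[\phi(r_n,X)\mathbbm{1}_{\{\tau_n\leq t_n\}}]=E_{r_n,x_n}[\tilde M_{\tau_n}\mathbbm{1}_{\{\tau_n\leq t_n\}}]$. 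On $\{\tau_n\leq t_n\}$ the time $\tau_n$ lies in $[r_n,t_n\wedge\tau_n]$, so hypothesis (ii) furnishes the pointwise bound $|\tilde M_{\tau_n}|\leq c+c(1+c)(t_n-r_n)$; combined with $P_{r_n,x_n}(\tau_n\leq t_n)\to 0$ from (iii), the correction vanishes.

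To handle the remainder $R_n:=E_{r_n,x_n}[\phi(r_n,X)\int_{r_n}^{t_n\wedge\tau_n}\frac{\beta(s,X^s)-\beta(r,x)}{t_n-r_n}\,ds]$, I apply Fubini and the Markov identity again, which replaces the factor $\phi(r_n,X)$ for each fixed $s$ by $\tilde M_s$, uniformly bounded by $c+c(1+c)(t_n-r_n)$ on $\{s<\tau_n\}\cap\{s\leq t_n\}$. Given $\varepsilon>0$, the right-continuity of $\beta$ at $(r,x)$ provides $\delta>0$, and I introduce $\sigma_n:=\inf\{s\in[r_n,T]\,|\,\|X^s-x_n^{r_n}\|\geq\delta/2\}\wedge T$. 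Splitting the integrand via $\mathbbm{1}_{\{s\leq\sigma_n\}}+\mathbbm{1}_{\{s>\sigma_n\}}$, the first indicator forces $d_S((s,X^s),(r,x))<\delta$ for large $n$ and thus $|\beta(s,X^s)-\beta(r,x)|<\varepsilon$, while $\{s>\sigma_n\}\subset\{\|X^{t_n}-x_n^{r_n}\|\geq\delta/2\}$ has vanishing probability by (iii). Letting $\varepsilon\downarrow 0$ concludes $R_n\to 0$ and finishes the proof.
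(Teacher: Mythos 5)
Your argument is correct in substance, but it takes a different route than the paper. The paper anchors the computation at the random endpoint: it first shows, via the mild-solution identity and (ii), that $E_{r_{n},x_{n}}[u(t_{n}\wedge\tau_{n},X^{t_{n}\wedge\tau_{n}})]\rightarrow u^{\leftarrow}(r,x)$, then disposes of the $\beta$-variation by applying Lemma~\ref{Viscosity Limit Lemma 3} to $(s,y)\mapsto|\beta(s,y)-\beta(r,x)|$, and finally kills the difference term $E_{r_{n},x_{n}}\big[\int_{r_{n}}^{t_{n}\wedge\tau_{n}}\frac{\beta(s,X^{s})}{t_{n}-r_{n}}(u(t_{n}\wedge\tau_{n},X^{t_{n}\wedge\tau_{n}})-u(s,X^{s}))\,ds\big]$ by Fubini and the strong Markov property at each fixed $s$ with the stopping time $\tau_{n,s}:=\tau_{n}\vee s$, which yields an $O(t_{n}-r_{n})$ bound. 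You instead anchor at the terminal representation $\phi(r_{n},X)$ of \eqref{Mild Solution Representation}, as in Lemma~\ref{Viscosity Limit Lemma 2}, use $E_{r_{n},x_{n}}[\phi(r_{n},X)]=u(r_{n},x_{n})$ for the main term, control the random truncation $(t_{n}-t_{n}\wedge\tau_{n})/(t_{n}-r_{n})$ by conditioning at the stopping time $t_{n}\wedge\tau_{n}$, and re-prove the needed localization (your $\sigma_{n}$ argument) rather than invoking Lemma~\ref{Viscosity Limit Lemma 3}; the ingredients (Fubini, the Markov and strong Markov properties, the mild-solution equation, hypotheses (ii) and (iii)) are the same, but the decomposition is genuinely different, essentially a sequential upgrade of Lemma~\ref{Viscosity Limit Lemma 2} versus the paper's reuse of Lemma~\ref{Viscosity Limit Lemma 3}. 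Two points should be tightened. First, you cannot literally invoke Doob's optional sampling theorem for the ``closed martingale'' $\tilde M$ in its explicit form $u(t,X^{t})-\int_{r_{n}}^{t}f(s,X^{s},u(s,X^{s}))\,ds$: since $u$ is not assumed right-continuous (that is the whole point of part (ii) of Theorem~\ref{Main Result 2}), this version of the martingale need not have right-continuous paths. The identity you need, namely $E_{r_{n},x_{n}}[\phi(r_{n},X)W_{n}]=E_{r_{n},x_{n}}\big[\big(u(\sigma,X^{\sigma})-\int_{r_{n}}^{\sigma}f(s,X^{s},u(s,X^{s}))\,ds\big)W_{n}\big]$ with $\sigma=t_{n}\wedge\tau_{n}$, should instead be derived from the strong Markov property of $\hat{\mathscr{X}}$ at the bounded stopping time $\sigma$ (exactly the mechanism used in the proof of Lemma~\ref{Characterization of Mild Solutions}), together with the tower property, since the weight is $\hat{\mathscr{F}}_{\sigma}$-measurable. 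Second, the quantity to be estimated carries the weight $W_{n}=(t_{n}-t_{n}\wedge\tau_{n})/(t_{n}-r_{n})$, not merely the indicator $\mathbbm{1}_{\{\tau_{n}\leq t_{n}\}}$; but as $W_{n}$ is $\hat{\mathscr{F}}_{\sigma}$-measurable and $0\leq W_{n}\leq\mathbbm{1}_{\{\tau_{n}\leq t_{n}\}}$, the same conditioning gives $|E_{r_{n},x_{n}}[\phi(r_{n},X)W_{n}]|\leq (c+c(1+c)(t_{n}-r_{n}))P_{r_{n},x_{n}}(\tau_{n}\leq t_{n})\rightarrow 0$ by (ii) and (iii), so both repairs are routine and your proof goes through.
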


\begin{proof}
Since $u$ is a mild solution to \eqref{Parabolic Terminal Value Problem}, we obtain from (ii) that $|E_{r_{n},x_{n}}[u(t_{n}\wedge\tau_{n},X^{t_{n}\wedge\tau_{n}})] - u(r_{n},x_{n})|$ $\leq c(1+c)(t_{n}-r_{n})$ for each $n\in\mathbb{N}$. Hence,
\begin{equation}\label{Mild Solution Cauchy Limit}
\lim_{n\uparrow\infty} E_{r_{n},x_{n}}[u(t_{n}\wedge\tau_{n},X^{t_{n}\wedge\tau_{n}})] = \lim_{n\uparrow\infty} u(r_{n},x_{n}) = u^{\leftarrow}(r,x).
\end{equation}
We note that, because the function $[0,T]\times S\rightarrow\mathbb{R}_{+}$, $(s,y)\mapsto |\beta(s,y) - \beta(r,x)|$ is right-continuous at $(r,x)$, Lemma~\ref{Viscosity Limit Lemma 3} implies that
\[
\lim_{n\uparrow\infty}
E_{r_{n},x_{n}}\bigg[\int_{r_{n}}^{t_{n}\wedge\tau_{n}}\frac{|\beta(s,X^{s}) - \beta(r,x)|}{t_{n}-r_{n}}\,ds\bigg]
= 0.
\]
So, from the hypothesis that $|u(t_{n}\wedge\tau_{n},X^{t_{n}\wedge\tau_{n}})| \leq c$ $P_{r_{n},x_{n}}$-a.s.~for all $n\in\mathbb{N}$ and the fact that $\beta(r,x) = \int_{r_{n}}^{t_{n}\wedge\tau_{n}}\beta(r,x)/(t_{n}-r_{n})\,ds + \beta(r,x)(t_{n}-t_{n}\wedge\tau_{n})/(t_{n}-r_{n})$ for each $n\in\mathbb{N}$, we readily infer from \eqref{Mild Solution Cauchy Limit} and (iii) that 
\[
\lim_{n\uparrow\infty}
E_{r_{n},x_{n}}\bigg[\int_{r_{n}}^{t_{n}\wedge\tau_{n}}\frac{\beta(s,X^{s})}{t_{n}-r_{n}}u(t_{n}\wedge\tau_{n},X^{t_{n}\wedge\tau_{n}})\,ds\bigg]
= \beta(r,x)u^{\leftarrow}(r,x).
\]
Consequently, the claim follows once we have shown that
\begin{equation}\label{Viscosity Limit 4 Equation}
\lim_{n\uparrow\infty} E_{r_{n},x_{n}}\bigg[\int_{r_{n}}^{t_{n}\wedge\tau_{n}}\frac{\beta(s,X^{s})}{t_{n}-r_{n}}(u(t_{n}\wedge\tau_{n},X^{t_{n}\wedge\tau_{n}}) - u(s,X^{s}))\,ds\bigg] = 0.
\end{equation}
To this end, we let $n\in\mathbb{N}$ and set $\tau_{n,s}:=\tau_{n}\vee s$ for each $s\in [r_{n},t_{n}]$, then $\tau_{n,s}$ is an $(\hat{\mathscr{F}}_{t})_{t\in [s,T]}$-stopping time. As $u$ is a mild solution to \eqref{Parabolic Terminal Value Problem}, we get that
\[
E_{s,y}[u(t_{n}\wedge\tau_{n,s},X^{t_{n}\wedge\tau_{n,s}})] = u(s,y) + E_{s,y}\bigg[\int_{s}^{t_{n}\wedge\tau_{n,s}}\alpha(s',X^{s'}) + \beta(s',X^{s'})u(s',X^{s'})\,ds'\bigg]
\]
for each $(s,y)\in [r_{n},t_{n}]\times S$. Hence, Fubini's theorem and the strong Markov property of $\hat{\mathscr{X}}$ yield that
\begin{align*}
\bigg|&E_{r_{n},x_{n}}\bigg[\int_{r_{n}}^{t_{n}\wedge\tau_{n}}\frac{\beta(s,X^{s})}{t_{n}-r_{n}}(u(t_{n}\wedge\tau_{n},X^{t_{n}\wedge\tau_{n}}) -u(s,X^{s}))\,ds\bigg]\bigg|\\
& =\bigg|\int_{r_{n}}^{t_{n}}E_{r_{n},x_{n}}\bigg[\frac{\beta(s,X^{s})}{t_{n}-r_{n}}(E_{s,X^{s}}[u(t_{n}\wedge\tau_{n,s},X^{t_{n}\wedge\tau_{n,s}})] - u(s,X^{s}))\mathbbm{1}_{\{\tau_{n} > s\}}\bigg]\,ds\bigg|\\
&\leq c(1+c)\int_{r_{n}}^{t_{n}}E_{r_{n},x_{n}}[|\beta(s,X^{s})|\mathbbm{1}_{\{\tau_{n} > s\}}]\,ds\leq c^{2}(1+c)(t_{n}-r_{n}),
\end{align*}
since $\tau_{n} = \tau_{n,s}$ on $\{\tau_{n} > s\}$ for all $s\in [r_{n},t_{n}]$. As $n\in\mathbb{N}$ has been arbitrarily chosen, we may take the limit $n\uparrow\infty$ to obtain \eqref{Viscosity Limit 4 Equation}, which proves the assertion.
\end{proof}

\begin{proof}[Proof of Theorem~\ref{Main Result 2}]
(i) We proceed similarly as in the proof of Theorem~\ref{Main Result 1}. Let $(r,x)\in [0,T)\times S$ and $\varphi\in\underline{\mathscr{S\!P}}\,u(r,x)$. Then there exist $\delta\in (0,T-r)$ and an $(\mathscr{F}_{t})_{t\in [r,T]}$-stopping time $\tau$ with $P_{r,x}(\tau > r) > 0$ such that
\[
(u-\varphi)(r,x) \geq E_{r,x}[(u-\varphi)(\widetilde{\tau}\wedge\tau,X^{\widetilde{\tau}\wedge\tau})]
\]
for every $\widetilde{\tau}\in\mathscr{T}$ with $\widetilde{\tau}\in [r,r+\delta)$. Let $c\geq 0$ and $\gamma\in (0,\delta]$ be such that $|(\partial_{s} + \mathscr{L})(\varphi)(s,y)|\vee|\alpha(s,y)|\vee|\beta(s,y)|\leq c$ for all $(s,y)\in [r,T)\times S$ with $d_{S}((s,y),(r,x)) < \gamma$. We set $\widetilde{\tau}:=\inf\{t\in [r,T]\,|\, \|X^{t} - x^{r}\| \geq \gamma/2\}\wedge T$ and $\hat{\tau}:=\widetilde{\tau}\wedge\tau$, then it follows that
\begin{align*}
\frac{1}{t-r}E_{r,x}\bigg[\int_{r}^{t\wedge\hat{\tau}}(\partial_{s} + \mathscr{L})(\varphi)(s,X^{s})\,ds\bigg] &\geq \frac{1}{t-r}E_{r,x}\bigg[\int_{r}^{t\wedge\hat{\tau}}\alpha(s,X^{s}) + \beta(s,X^{s})u(s,X^{s})\,ds\bigg]
\end{align*}
for each $t\in (r,r+\gamma/3)$, as $\mathscr{X}$ fulfills the $\mathscr{L}$-martingale property and $u$ is a mild subsolution to \eqref{Parabolic Terminal Value Problem}. Due to Lemmas~\ref{Viscosity Limit Lemma} and~\ref{Viscosity Limit Lemma 2}, regardless of whether $u$ is right-continuous on $[0,T)\times S$, we may take the limit $t\downarrow r$ to obtain that
\[
(\partial_{r} + \mathscr{L})(\varphi)(r,x) \geq \alpha(r,x) + \beta(r,x) u(r,x).
\]
For this reason, $u$ is a stochastic viscosity subsolution to \eqref{Parabolic Terminal Value Problem}. The fact that $u$ is also a stochastic viscosity supersolution can be proven with similar reasoning.

(ii) To verify that $u^{\leftarrow}$ is a right-hand viscosity subsolution, let $(r,x)\in [0,T)\times S$ and $\varphi\in\underline{\mathscr{P}}\,u^{\leftarrow}(r,x)$. Then there is $\delta\in (0,T-r)$ such that
\begin{equation}\label{Right-hand Viscosity Maximum}
(u^{\leftarrow} - \varphi)(r,x)\geq (u^{\leftarrow} - \varphi)(s,y)
\end{equation}
for each $(s,y)\in [r,T)\times S$ fulfilling $d_{S}((s,y),(r,x)) < \delta$. Certainly, there exists a sequence $(r_{n},x_{n})_{n\in\mathbb{N}}$ in $[r,T)\times S$ such that $\lim_{n\uparrow\infty} (r_{n},x_{n})=(r,x)$ and $\lim_{n\uparrow\infty} u(r_{n},x_{n})= u^{\leftarrow}(r,x)$. We set
\[
\eta_{n}:= (u^{\leftarrow}-\varphi)(r,x) - (u-\varphi)(r_{n},x_{n})\quad\text{for all $n\in\mathbb{N}$.}
\]
Then, since $\lim_{n\uparrow\infty}\eta_{n} = 0$, there exists a sequence $(t_{n})_{n\in\mathbb{N}}$ in $[r,T)$ such that $r_{n} < t_{n}$ for each $n\in\mathbb{N}$,  $\lim_{n\uparrow\infty} t_{n} = r$, and $\lim_{n\uparrow\infty} \eta_{n}/(t_{n}-r_{n}) = 0$. For instance, we could have set $t_{n} := r_{n} + (1/2)\min\big\{\sqrt{|\eta_{n}|}, T- r_{n}\big\}$ for each $n\in\mathbb{N}$.

Let us choose $c > 0$ and $\gamma\in (0,\delta]$ such that $|(\partial_{s} + \mathscr{L})(\varphi)(s,y)|\vee|\alpha(s,y)|\vee|\beta(s,y)|\leq c$ for all $(s,y)\in [r,T)\times S$ with $d_{S}((s,y),(r,x)) < \gamma$. We set 
\[
\tau_{n}:=\inf\{t\in [r_{n},T]\,|\, \|X^{t} - x_{n}^{r_{n}}\|\geq \gamma/2\}\quad\text{for each $n\in\mathbb{N}$,}
\]
then $\tau_{n}$ is an $(\hat{\mathscr{F}}_{t})_{t\in [r_{n},T]}$-stopping time with $\tau_{n} > r_{n}$ $P_{r_{n},x_{n}}$-a.s. In addition, let $n_{0}\in\mathbb{N}$ be such that $(t_{n} -r_{n}) + d_{S}((r_{n},x_{n}),(r,x)) < \gamma/2$ for all $n\in\mathbb{N}$ with $n\geq n_{0}$. Then from \eqref{Right-hand Viscosity Maximum} and $u^{\leftarrow}\geq u$ we infer that
\[
(u^{\leftarrow} - \varphi)(r,x)\geq E_{r_{n},x_{n}}[(u-\varphi)(t_{n}\wedge\tau_{n},X^{t_{n}\wedge\tau_{n}})]
\]
for every $n\in\mathbb{N}$ such that $n\geq n_{0}$, because $d_{S}((t_{n}\wedge\tau_{n},X^{t_{n}\wedge\tau_{n}}),(r,x)) < \gamma$ on $\{X^{r_{n}} = x_{n}^{r_{n}}\}$. Moreover, since $u$ is a mild subsolution to \eqref{Parabolic Terminal Value Problem} and the stopped process $[r_{n},T)\times\Omega\rightarrow\mathbb{R}$, 
\[
(t,\omega)\mapsto\varphi(t\wedge\tau_{n}(\omega),X^{t\wedge\tau_{n}}(\omega)) - \int_{r_{n}}^{t\wedge\tau_{n}(\omega)}(\partial_{s}+\mathscr{L})(\varphi)(s,X^{s}(\omega))\,ds
\]
is an $(\mathscr{F}_{t})_{t\in [r_{n},T)}$-martingale under $P_{r_{n},x_{n}}$, it follows that
\begin{align*}
E_{r_{n},x_{n}}[(u-\varphi)(t_{n}\wedge\tau_{n},X^{t_{n}\wedge\tau_{n}})] &\geq (u-\varphi)(r_{n},x_{n})\\
& + E_{r_{n},x_{n}}\bigg[\int_{r_{n}}^{t_{n}\wedge\tau_{n}}\alpha(s,X^{s}) +\beta(s,X^{s})u(s,X^{s})\,ds\bigg]\\
&\quad - E_{r_{n},x_{n}}\bigg[\int_{r_{n}}^{t_{n}\wedge\tau_{n}}(\partial_{s} + \mathscr{L})(\varphi)(s,X^{s})\,ds\bigg]
\end{align*}
for each $n\in\mathbb{N}$ with $n\geq n_{0}$. By recalling the definition of $\eta_{n}$, this implies that
\begin{align*}
\frac{\eta_{n}}{t_{n} - r_{n}} + \frac{1}{t_{n}-r_{n}}E_{r_{n},x_{n}}\bigg[&\int_{r_{n}}^{t_{n}\wedge\tau_{n}}(\partial_{s} + \mathscr{L})(\varphi)(s,X^{s})\,ds\bigg]\\
&\geq \frac{1}{t_{n}-r_{n}} E_{r_{n},x_{n}}\bigg[\int_{r_{n}}^{t_{n}\wedge\tau_{n}}\alpha(s,X^{s}) + \beta(s,X^{s})u(s,X^{s})\,ds\bigg]
\end{align*}
for all $n\in\mathbb{N}$ with $n\geq n_{0}$. Hence, as $\{\tau_{n}\leq t_{n}\} = \{\|X^{t_{n}} - x_{n}^{r_{n}}\|\geq \gamma/2\}$ for all $n\in\mathbb{N}$ and $\lim_{n\uparrow\infty} P_{r_{n},x_{n}}(\|X^{t_{n}} - x_{n}^{r_{n}}\|\geq \gamma/2) = 0$, Lemmas~\ref{Viscosity Limit Lemma 3} and~\ref{Viscosity Limit Lemma 4} ensure that we may take the limit $n\uparrow\infty$, which yields that
\[
(\partial_{r} + \mathscr{L})(\varphi)(r,x)\geq \alpha(r,x) + \beta(r,x)u^{\leftarrow}(r,x).
\]
This shows that $u^{\leftarrow}$ is a right-hand viscosity subsolution to \eqref{Parabolic Terminal Value Problem}. Since the verification that $u_{\leftarrow}$ is a right-hand viscosity supersolution can be handled in much the same way, the claim is proven.
\end{proof}

\subsection{Measurability}

For the lemma below, let $I$ be a non-degenerate interval in $[0,T]$ and $R\in\widetilde{\mathscr{S}}_{T}$ be non-empty and stable under stopping, i.e., $x^{t}\in R$ for each $(t,x)\in [0,T]\times R$. Additionally, we introduce the filtration $(\mathscr{R}_{t})_{t\in [0,T]}$ by $\mathscr{R}_{t}:=R\cap\widetilde{\mathscr{S}}_{t}$ for all $t\in [0,T]$. This general setting allows for a simultaneous treatment of the main cases $I= [0,T)$, $I = [0,T]$, $R= \widetilde{S}$, and $R=S$.

\begin{Lemma}\label{Measurability Lemma}
Assume that $E$ is a Polish space with Borel $\sigma$-field $\mathscr{B}$ and complete metric $\varrho$ that induces the topology of $E$, and let $u:I\times R\rightarrow E$.
\begin{enumerate}[(i)]
\item The map $u$ is non-anticipative and product measurable if and only if it is progressively measurable. In this case, it is also Borel measurable.

\item Let $u$ be right-continuous relative to $d_{\infty}$, then $u(\cdot,x)$ is right-continuous for all $x\in R$. Moreover, if $R$ is closed with respect to $\|\cdot\|$ and $u$ is continuous relative to $d_{\infty}$, then $u(\cdot,x)$ is c\`{a}dl\`{a}g and left-continuous at each continuity point of $x$.

\item If $u$ is right-continuous (with respect to $d_{S}$), then it is progressively measurable.
\end{enumerate}
\end{Lemma}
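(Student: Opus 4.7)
I would organize the proof around part (i), then handle (ii) with direct path arguments, and deduce (iii) by reducing to (i) via an approximation.

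For part (i), I would first prove the equivalence and then the Borel-measurability addendum. The direction progressive $\Rightarrow$ non-anticipative and product measurable is direct: setting $t=s$ shows $u(s,\cdot)$ is $\mathscr{R}_s$-measurable, which together with the fact that $x$ and $x^s$ agree against any $\mathscr{R}_s$-measurable function yields $u(s,x)=u(s,x^s)$; product measurability follows by exhausting $I$ with a countable family of initial intervals. For the converse, given non-anticipative and product measurable $u$ and a fixed $t\in I$, I would apply non-anticipation twice together with the identity $(x^t)^s=x^s$ for $s\le t$ to obtain $u(s,x)=u(s,x^t)$ on $[0,t]\cap I\times R$. This writes $u|_{[0,t]\cap I\times R}=u\circ\psi_t$ with $\psi_t(s,x):=(s,x^t)$, and since $x\mapsto x(t\wedge r)$ is $\mathscr{R}_{t\wedge r}\subset\mathscr{R}_t$-measurable for every $r\in[0,T]$, $\psi_t$ is $\mathscr{B}([0,t]\cap I)\otimes\mathscr{R}_t\to\mathscr{B}([0,t]\cap I)\otimes\mathscr{R}_T$ measurable, so the composition with the product-measurable $u$ delivers progressive measurability. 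For Borel measurability I would use the map $\phi(s,x):=(s,x^s)$, which is $d_S$-continuous by the very definition of $d_S$ and satisfies $u=u\circ\phi$ by non-anticipation; each coordinate $(s,x)\mapsto x^s(r)$ is the composition of the $d_S$-continuous map $(s,x)\mapsto x^s\in(\widetilde S,\rho)$ with the Borel evaluation $y\mapsto y(r)$, so $\phi$ is $\mathscr{B}(I\times R,d_S)\to\mathscr{B}(I)\otimes\mathscr{R}_T$-measurable and the composition with $u$ gives the claim.

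For part (ii), the first assertion follows from the path estimate $\|x^{s_n}-x^s\|=\sup_{r\in(s,s_n]}|x(r)-x(s)|\to 0$ for càdlàg $x$ and $s_n\downarrow s$, which yields $d_\infty((s_n,x),(s,x))\to 0$ and hence $u(s_n,x)\to u(s,x)$. For the existence of a left limit when $s_n\uparrow s$, a similar bookkeeping argument using the left limit of $x$ shows $x^{s_n}\to y$ in sup norm, where $y$ equals $x$ on $[0,s)$ and takes the constant value $x(s-)$ on $[s,T]$; closedness of $R$ under $\|\cdot\|$ secures $y\in R$, and $y=y^s$ gives $d_\infty((s_n,x),(s,y))\to 0$, so continuity of $u$ produces the left limit $u(s,y)$. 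If $x$ is continuous at $s$ then $x(s-)=x(s)$, so $y=x^s$, and combining with $u(s,x)=u(s,x^s)$ (which is forced by $d_\infty((s,x),(s,x^s))=0$ together with $d_\infty$-continuity of $u$) identifies the left limit with $u(s,x)$, yielding left-continuity at $s$.

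For part (iii), the plan is to reduce to (i) by establishing non-anticipation and product measurability. Non-anticipation is immediate since $d_S((s,x),(s,x^s))=0$ and $u$ is right-continuous. The step I expect to be the main obstacle is the $\mathscr{R}_s$-measurability of each section $u(s,\cdot)$: restricting right-continuity at $(s,x)$ to sequences with $s'=s$ shows that $u(s,\cdot)$ is continuous with respect to the pseudometric $y\mapsto\rho(y^s,x^s)$, so by non-anticipation it factors as $u(s,y)=h(y^s)$ for a map $h$ that is $\rho$-continuous on its domain in $(\widetilde S,\rho)$ and hence $\widetilde{\mathscr{S}}_T$-measurable; combined with the $\mathscr{R}_s$-measurability of $y\mapsto y^s$, this gives $\mathscr{R}_s$-measurability of $u(s,\cdot)$. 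With this in hand I would fix $t\in I$, set $s^k_n=kt/2^n$ and $\lceil s\rceil_n:=\min\{s^k_n:s^k_n\ge s\}$, and define $u_n(s,x):=u(\lceil s\rceil_n,x)$; writing $u_n=\sum_k u(s^k_n,\cdot)\,\mathbbm{1}_{A^n_k}(s)$ shows each $u_n$ is $\mathscr{B}([0,t]\cap I)\otimes\mathscr{R}_t$-measurable, while right-continuity of $u$ together with $\|x^{\lceil s\rceil_n}-x^s\|\to 0$ yields $u_n(s,x)\to u(s,x)$ pointwise. Hence $u$ is a pointwise limit of progressively measurable functions on $[0,t]\cap I\times R$, which gives product measurability, and part (i) completes the argument.
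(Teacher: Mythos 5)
Your proposal is correct and follows essentially the same route as the paper: a Doob--Dynkin/Galmarino-type factorization through $x\mapsto x^{t}$ for part (i), the same auxiliary path (your $y$, the paper's $x_{t}$) built from the left limit together with closedness of $R$ under $\|\cdot\|$ for part (ii), and adaptedness of $u(t,\cdot)$ via $\rho$-continuity of its restriction to $\{x\in R\,|\,x=x^{t}\}$ for part (iii). The only cosmetic differences are that in (i) you compose with the frozen-time map $(s,x)\mapsto(s,x^{t})$ for fixed $t$ instead of the progressively measurable path process $(s,x)\mapsto(s,x^{s})$, and in (iii) you spell out the dyadic right-approximation that the paper subsumes under the standard fact that adapted processes with right-continuous paths are progressively measurable.
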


\begin{proof}
(i) For only if note that, as the process $[0,T]\times\widetilde{S}\rightarrow\widetilde{S}$, $(t,x)\mapsto x^{t}$, which is the path process of $\widetilde{\xi}$, has c\`{a}dl\`{a}g paths, it is progressively measurable with respect to its natural filtration $(\widetilde{\mathscr{S}}_{t})_{t\in [0,T]}$. For this reason, the time-space process
\[
[0,T]\times\widetilde{S}\rightarrow [0,T]\times\widetilde{S},\quad (t,x)\mapsto (t,x^{t})
\]
is $(\widetilde{\mathscr{S}}_{t})_{t\in [0,T]}$-progressively measurable provided the image is endowed with the product $\sigma$-field $\mathscr{B}([0,T])\otimes\widetilde{\mathscr{S}}_{T}$. Since $u$ is non-anticipative, $u(t,x) = u(t,x^{t})$ for all $(t,x)\in I\times R$. Thus, the $(\mathscr{R})_{t\in I}$-progressive measurability of $u$ follows from its $\mathscr{B}(I)\otimes\mathscr{R}_{T}$-measurability.

For if we recall that $u$ must be product measurable and $(\mathscr{R}_{t})_{t\in I}$-adapted. As $E$ is Polish, for given $t\in I$ there is a Borel measurable map $\phi:R\rightarrow E$ such that $u(t,x)=\phi(x^{t})$ for all $x\in R$. From $u(t,x) = \phi(x^{t}) =\phi((x^{t})^{t}) = u(t,x^{t})$ for all $x\in R$ we see that $u$ is non-anticipative, as desired.

We turn to the last claim. So, let $u$ be non-anticipative and product measurable. By the definition of $d_{S}$, the map $[0,T]\times\widetilde{S}\rightarrow [0,T]\times\widetilde{S}$, $(t,x)\mapsto (t,x^{t})$ is uniformly continuous provided the domain is equipped with $d_{S}$ and the image is equipped with a product metric. In particular, Borel measurability follows. That is,
\[
\{(t,x)\in [0,T]\times\widetilde{S}\,|\, (t,x^{t})\in F\}\in\mathscr{B}([0,T]\times\widetilde{S})\quad \text{for all $F\in\mathscr{B}([0,T])\otimes\widetilde{\mathscr{S}}_{T}$}.
\]
Hence, $u^{-1}(B) = \{(t,x)\in I\times R\,|\,(t,x^{t})\in u^{-1}(B)\}\in\mathscr{B}(I\times R)$ for each $B\in\mathscr{B}$, since $u$ is non-anticipative and $u^{-1}(B)\in\mathscr{B}(I)\otimes\mathscr{R}_{T}$. Thus, $u$ is Borel measurable.

(ii) We fix $x\in R$ and let $r\in I$ with $r < \sup I$ and $\varepsilon > 0$. Then there is $\delta > 0$ such that $\varrho(u(t,y),u(r,x)) < \varepsilon$ for all $(t,y)\in I\times R$ with $t\geq r$ and $d_{\infty}((t,y),(r,x)) < \delta$. Since $x$ is right-continuous at $r$, there is $\gamma\in (0,T-r)$ such that
\[
|x(s) - x(r)| < \delta/2
\]
for all $s\in [r,r+\gamma)$. We set $\eta:=\gamma\wedge(\delta/2)$, then $\varrho(u(t,x),u(r,x)) < \varepsilon$ for all $t\in [r,r+\eta)\cap I$, because $d_{\infty}((t,x),(r,x)) < \delta/2 + \sup_{s\in [r,t]}|x(s) - x(r)| \leq \delta$.

Now let $R$ be closed with respect to $\|\cdot\|$ and $u$ be continuous relative to $d_{\infty}$. We fix $t\in I$ with $t > \inf I$ and prove that $\lim_{r\uparrow t} u(r,x) = u(t,x_{t})$, where $x_{t}\in R$ is defined via $x_{t}(s):=x(s)\mathbbm{1}_{[0,t)}(s) + x(t-)\mathbbm{1}_{[t,T]}(s)$ and satisfies $\lim_{r\uparrow t} \|x^{r}-x_{t}\| = 0$. So, let $\varepsilon > 0$, then there is $\delta > 0$ such that $\varrho(u(r,y),u(t,x_{t})) < \varepsilon$ for all $(r,y)\in I\times R$ with $d_{\infty}((r,y),(t,x_{t})) < \delta$. Moreover, choose $\gamma\in (0,t)$ such that
\[
|x(s) - x(t-)| < \delta/4
\]
for all $s\in (t-\gamma,t)$, We define $\eta:=\gamma\wedge(\delta/2)$, then $\varrho(u(r,x),u(t,x_{t})) < \varepsilon$ for every $r\in (t-\eta,t)\cap I$, since $d_{\infty}((r,x),(t,x_{t})) < \delta/2 + |x(r) - x(t-)| + \sup_{s\in [r,t)}|x(t-) -x(s)| < \delta$. This in fact concludes the proof, because $x_{t} = x^{t}$ whenever $x$ is continuous at $t$.

(iii) As $u$ must be right-continuous with respect to $d_{\infty}$, it follows from (ii) that $u(\cdot,x)$ is right-continuous for each $x\in R$. Moreover, $u$ is $(\mathscr{R}_{t})_{t\in I}$-adapted. Indeed, let $t\in I$, then we easily see that $R_{t}:=\{x\in R\,|\,x = x^{t}\}$ is closed in $R$ with respect to $\rho$ and the restriction $u_{t}$ of $u(t,\cdot)$ to $R_{t}$ is continuous with respect to $\rho$. Non-anticipation yields that $u(t,x) = u_{t}(x^{t})$ for all $x\in R$, which in turn guarantees the $\mathscr{R}_{t}$-measurability of $u(t,\cdot)$. Now the claim follows.
\end{proof}


\bibliography{library}
\bibliographystyle{plain}
\end{document}